\numberwithin{equation}{section}
\newcommand{\field}[1]{\mathbb{#1}}
\newcommand{\Z}{\field{Z}}
\newcommand{\R}{\field{R}}
\newcommand{\C}{\field{C}}
\newcommand{\N}{\field{N}}
 \def\cC{\mathscr{C}}
\def\cL{\mathscr{L}}
\def\cO{\mathscr{O}}
\def\mH{\mathcal{H}}
\def\mO{\mathcal{O}}
\def\cF{\mathscr{F}}
\def\cL{\mathscr{L}}
\def\cP{\mathscr{P}}
\def\cQ{\mathscr{Q}}
\def\cK{\mathscr{K}}
\def\cT{\mathscr{T}}
 \DeclareMathOperator{\End}{End}
 \DeclareMathOperator{\Ker}{Ker}
 \DeclareMathOperator{\rank}{rank}
\DeclareMathOperator{\Id}{Id}
\DeclareMathOperator{\td}{Td} \DeclareMathOperator{\vol}{vol}
\DeclareMathOperator{\ch}{ch}
\newcommand{\norm}[1]{\lVert#1\rVert}
\newcommand{\om}{\omega}
\newtheorem{thm}{Theorem}[section]
\newtheorem{lemma}[thm]{Lemma}
\newtheorem{prop}[thm]{Proposition}
\newtheorem{cor}[thm]{Corollary}
\theoremstyle{definition}
\newtheorem{rem}[thm]{Remark}
\theoremstyle{definition}
\newtheorem{defn}[thm]{Definition}
\newcommand{\be}{\begin{eqnarray}}
\newcommand{\ee}{\end{eqnarray}}
\newcommand{\ov}{\overline}
\newcommand{\var}{\varepsilon}
\newcommand{\comment}[1]{}
\begin{document}


\title[Berezin-Toeplitz quantization for eigenstates
of the Bochner-Laplacian]{Berezin-Toeplitz quantization for eigenstates
of the Bochner-Laplacian on symplectic manifolds}

\author{Louis Ioos}
\address{Institut de Math\'ematiques de Jussieu--Paris Rive Gauche,
UFR de Math{\'e}matiques,
Universit{\'e} Paris Diderot - Paris 7, Case 7012,
75205 Paris Cedex 13,\,France}
\email{louis.ioos@imj-prg.fr}
\thanks{L.\ I.\ was supported by grants from R\'egion Ile-de-France}
\author{Wen Lu}

\address{School of Mathematics and Statistics,
Huazhong University of Science and Technology,
Wuhan, 430074, People's Republic of China}
\email{wlu@hust.edu.cn}
\thanks{W.\ L.\ partially supported by National Nature Science Foundation
in China (No. 11401232)}

\author{Xiaonan Ma}

\address{Institut de Math\'ematiques de Jussieu--Paris Rive Gauche,
UFR de Math{\'e}matiques,
Universit{\'e} Paris Diderot - Paris 7, Case 7012,
75205 Paris Cedex 13, \,France}
\email{xiaonan.ma@imj-prg.fr}
\thanks{X.\ M.\ partially supported by
NNSFC  11528103, ANR-14-CE25-0012-01 and
funded through the Institutional Strategy of
the University of Cologne within the German Excellence Initiative}

\author{George Marinescu}

\address{Universit{\"a}t zu K{\"o}ln,  Mathematisches Institut,
    Weyertal 86-90,   50931 K{\"o}ln, Germany
    \newline
    \mbox{\quad}\,Institute of Mathematics `Simion Stoilow', 
	Romanian Academy,
Bucharest, Romania}
\email{gmarines@math.uni-koeln.de}
\thanks{G.\ M.\ partially supported by DFG funded
project SFB TRR 191}

\dedicatory{To Gennadi Henkin, in memoriam}
\begin{abstract}
We study the Berezin-Toeplitz quantization using as quantum space
the space of eigenstates of the renormalized Bochner Laplacian 
corresponding to
eigenvalues localized near the origin on a 
symplectic manifold. 
We show that this quantization has the correct semiclassical behavior
and construct the corresponding star-product.
%
   \end{abstract}

\maketitle
\tableofcontents
\section{Introduction}\label{s0}
Quantization is a recipe in physics for
passing from a classical system  
to a quantum system by obeying certain natural rules.
By a classical system we understand a classical
phase space (a symplectic manifold) and classical
observables (smooth functions).
The quantum system consists of a quantum space (a Hilbert space of
functions or sections of a bundle)
and quantum observables (bounded linear operators on the quantum space).
The quantum system should reduce to the classical one as the size of 
the objects gets large, 
that is, as the ``Planck constant'',
which, heuristically, corresponds to the magnitude where the quantum 
phenomena
become relevant, tends to zero. This is the so-called classical limit.

The original concept of quantization goes back to Weyl, von Neumann, and
Dirac. In the geometric quantization introduced by Kostant \cite{Kos:70}
and Souriau \cite{Sou:70} the quantum space is the Hilbert space of
square integrable holomorphic sections of a prequantum line bundle
(see also 
\cite{BFFLS,CaGuRa:90,Fedo:96}).
Berezin-Toeplitz quantization is a particularly efficient version of 
the geometric quantization theory.
Toeplitz operators and more generally Toeplitz structures were introduced 
in geometric quantization by Berezin \cite{Berez:74} 
and Boutet de Monvel-Guillemin \cite{BouGu81}. 
Using the analysis of Toeplitz structures 
\cite{BouGu81}, Bordemann-Meinrenken-Schlichenmaier \cite{BMS94} 
and Schlichenmaier \cite{Schlich:00} showed that the Berezin-Toeplitz 
quantization on a compact K\"ahler manifold
satisfies the correspondence principle asymptotically and introduced 
the Berezin-Toeplitz star product (cf.\eqref{e:cp} and \eqref{toe4.4c})
when $E=\C$.


In order to generalize the Berezin-Toeplitz quantization to arbitrary
symplectic manifolds one has to find a substitute for the space of
holomorphic sections
of tensor powers of the prequantum line bundle. A natural candidate is
the kernel of the Dirac operator, since it has similar features to the space
of holomorphic sections in the K\"ahler case, especially the asymptotics
of the kernels of the orthogonal projection on both spaces \cite{Dai04}. 
The Berezin-Toeplitz quantization with quantum space the kernel of the Dirac operator
was carried over by Ma-Marinescu \cite{MM08}.

Another appealing candidate is the space of eigenstates of the renormalized 
Bochner Laplacian \cite{GU,Ma07,Ma08} corresponding to eigenvalues
localized near the origin, cf.\ \eqref{0.6}, \eqref{dp}.
In this paper we construct the Berezin-Toeplitz quantization for these 
spaces and show that it has the correct semiclassical behavior.
The difference between this case and the quantization by the kernel
of the Dirac operator comes from the possible presence of eigenvalues localized
near the origin but different from zero. In this situation the analysis becomes 
more difficult.

 
Let us note also that Charles \cite{Cha14} proposed recently another 
approach to quantization of symplectic manifolds and Hsiao-Marinescu
\cite{HM}
constructed a Berezin-Toeplitz quantization for eigenstates of 
small eigenvalues in the case of complex manifolds.

The readers are referred to the monograph \cite{Ma07} (also \cite{Ma10})
for a comprehensive study of the (generalized) Bergman kernel, 
Berezin-Toeplitz quantization and its applications.

Let us describe the setting and results in detail. 
Let $(X, \omega)$ be a compact symplectic
manifold of real dimension $2n$. Let $(L, h^{L})$ be a
Hermitian line bundle on $X$, and let $\nabla^{L}$ be a 
Hermitian connection on $(L, h^{L})$ with
the curvature $R^{L}=(\nabla^{L})^{2}$. Let $(E, h^{E})$ be a
Hermitian vector bundle with Hermitian connection $\nabla^{E}$.
We will assume
throughout the paper that $L$ is a line bundle
satisfying the pre-quantization condition
\begin{equation}\label{0.1}
\frac{\sqrt{-1}}{2\pi}R^{L}=\omega.
\end{equation}
We choose an almost complex structure $J$
such that $\omega$ is $J$-invariant. The almost complex structure $J$
induces a splitting $TX\otimes_{\R}\C=T^{(1, 0)}X\oplus T^{(0, 1)}X$,
where $T^{(1, 0)}X$ and $T^{(0, 1)}X$
are the eigenbundles of $J$ corresponding to the
eigenvalues $\sqrt{-1}$ and $-\sqrt{-1}$, respectively.

Let $g^{TX}$ be a $J$-invariant Riemannian metric on $TX$.
Let $dv_{X}$ be the Riemannian volume form  of $(X, g^{TX})$. The
$L^{2}$-Hermitian product on the space $\cC^{\infty}(X, L^{p}\otimes E)$ of smooth sections
of $L^{p}\otimes E$ on $X$, with $L^{p}:=L^{\otimes p}$, is given by
\begin{equation}\label{0.2}
\big\langle s_{1}, s_{2}\big\rangle 
=\int_{X}\big\langle s_{1}(x), s_{2}(x)\big\rangle dv_{X}(x).
\end{equation}
Let $\nabla^{TX}$ be the Levi-Civita connection on $(X, g^{TX})$,
and let $\nabla^{L^{p}\otimes E}$
be the connection on $L^{p}\otimes E$ induced by $\nabla^{L}$ and 
$\nabla^{E}$.
Let $\{e_{k}\}$ be a local orthonormal frame of $(TX, g^{TX})$.
The Bochner Laplacian acting on $\cC^{\infty}(X, L^{p}\otimes E)$
is given by
\begin{equation}\label{0.3}
\Delta^{L^{p}\otimes E}
=-\sum_{k}\Big[\big(\nabla_{e_{k}}^{L^{p}\otimes E}\big)^{2}
-\nabla_{\nabla_{e_{k}}^{TX}e_{k}}^{L^{p}\otimes E}\Big].
\end{equation}
Let $\Phi\in \cC^{\infty}(X, \textup{End}(E))$ be Hermitian 
(i.e., self-adjoint with respect to $h^{E}$).
The renormalized Bochner Laplacian is defined by
\begin{align}\label{0.4}
\Delta_{p, \Phi}=\Delta^{L^{p}\otimes E}-\tau p+\Phi,
\end{align}
with
\begin{align}
\tau=\frac{\sqrt{2}}{2}\sum_{k}R^{L}(e_{k}, Je_{k}).
\end{align}
Set
\begin{align}
\mu_{0}=\inf_{u\in T^{(1, 0)}_{x}X,\ x\in X}
R^{L}(u, \overline{u})/|u|^{2}_{g^{TX}}.
\end{align}
By \cite{GU}, \cite[Corollary 1.2]{Ma02}, \cite[Theorem 8.3.1]{Ma07} there exists $C_{L}>0$
independent of $p$ such that
\begin{align}\label{0.5}
\textup{Spec}(\Delta_{p, \Phi})
\subset [-C_{L}, C_{L}]\cup [2\mu_{0} p-C_{L}, +\infty),
\end{align}
where $\textup{Spec}(A)$ denotes the spectrum of the operator $A$.
Since $\Delta_{p, \Phi}$ is an elliptic operator on a compact manifold,
it has discrete spectrum and its eigensections are smooth.
Let
\begin{equation}\label{0.6}
\mH_{p}:=\bigoplus_{\lambda\in[-C_{L}, C_{L}]}
\Ker(\Delta_{p, \Phi}-\lambda)\subset \cC^{\infty}(X, L^{p}\otimes E)
\end{equation}
be the direct sum of eigenspaces of $\Delta_{p, \Phi}$ corresponding 
to the eigenvalues lying in $[-C_{L}, C_{L}]$.

In mathematical physics terms, the operator $\Delta_{p, \Phi}$ is 
a semiclassical Schr\"odinger operator and the space $\mH_p$
is the space of its bound states as $p\to\infty$. 
The space $\mH_p$ proves to be an appropriate replacement for 
the space of holomorphic sections $H^0(X,L^p\otimes E)$ 
from the K\"ahler case. Indeed, if $(X,\omega)$ is K\"ahler,
then $\mH_p=H^0(X,L^p\otimes E)$ for $p$ large enough. Moreover,
for an arbitrary 
compact prequantized symplectic manifold $(X,\omega)$ as above,
the dimension of the space $\mH_p$ is given for $p$ large enough 
as in the K\"ahler case by the Riemann-Roch-Hirzebruch formula,
see \cite[Corollary 1.2]{Ma02}, \cite[Theorem 8.3.1]{Ma07},
\begin{equation}\label{dp}
d_p:=\dim\mH_p=\langle\td(X)\ch(L^p\otimes E),[X]\rangle
\sim p^n(\rank{E})\vol_{\om}(X),\:\:p\gg1.
\end{equation}
Another striking similarity is the
fact that the kernel of the orthogonal projection on $\mH_p$ has an
asymptotic expansion analogous to the Bergman kernel expansion
for K\"ahler manifolds, see \cite{Ma07,Ma08}.
We will use the asymptotic expansion of \cite{Ma07,Ma08} together 
with the approach
of \cite{MM08} to Berezin-Toeplitz quantization
in order to derive the properties of Toeplitz operators modeled on the
projection on $\mH_p$.

Let $P_{\mH_p}$ be the orthogonal projection from
$\cC^{\infty}(X, L^p\otimes E)$ onto $\mH_p$. The kernel
$P_{\mH_p}(x, x')$ of $P_{\mH_p}$ with respect to $dv_{X}(x')$
is called a generalized Bergman kernel \cite{Ma08}.
Note that
$P_{\mH_p}(x, x')\in (L^{p}\otimes E)_{x}
\otimes (L^p\times E)_{x'}^{\ast}$.
For a smooth section $f\in \cC^{\infty}(X, \End(E))$ of the bundle $\End(E)$, 
we define the Berezin-Toeplitz quantization of $f$ by
\begin{equation}\label{0.6a}
T_{f, p}=P_{\mH_{p}}fP_{\mH_{p}}\in \End(L^{2}(X, L^p\otimes E)),
\end{equation}
where we denote for simplicity by $f$ the endomorphism of 
$L^{2}(X, L^p\otimes E)$ induced by $f$, namely,
$s\mapsto fs$, with $(fs)(x)=f(x)(s(x))$, for $s\in L^{2}(X, L^p\otimes E)$
and $x\in X$.

\begin{defn}\label{d3.1}
A Toeplitz operator is a sequence $\{T_{p}\}=\{T_{p}\}_{p\in \N}$ 
of linear operators
\begin{align}\label{0.16}
T_{p}: L^{2}(X, L^{p}\otimes E)\longrightarrow L^{2}(X, L^{p}\otimes E)
\end{align}
with the following properties:
\\
(i) For any $p\in \N$, we have
\begin{align}\label{0.17}
T_{p}=P_{\mH_{p}}T_{p}P_{\mH_{p}};
\end{align}
(ii) There exist a sequence $g_{l}\in \cC^{\infty}(X, \End(E))$ such that
for all $k\geqslant 0$ there exists $C_{k}>0$ with
\begin{align}\label{0.18}
\Big\|T_{p}-P_{\mH_{p}}\Big(\sum^{k}_{l=0}p^{-l}g_{l}\Big)P_{\mH_{p}}
\Big\|   \leqslant  C_{k} p^{-k-1},
\end{align}
where $\|\cdot\|$ denotes the operator norm on the space of 
the bounded operators.

If each $T_{p}$ is self-adjoint, then $\{T_{p}\}_{p\in \N}$ is called 
self-adjoint.
\end{defn}
We express (\ref{0.18}) symbolically by
\begin{align}\label{0.19}
T_{p}=\sum^{k}_{l=0}p^{-l}T_{g_{l}, p}+ \mO(p^{-k-1}).
\end{align}
If (\ref{0.18}) holds for any $k\in \N$, then we write
\begin{align}\label{0.20}
T_{p}=\sum^{\infty}_{l=0}p^{-l}T_{g_{l}, p}+ \mO(p^{-\infty}).
\end{align}

The main result of this paper is as follows.
\begin{thm}\label{t0}
Let $(X, J, \omega)$ be a compact symplectic manifold, 
$(L, h^{L}, \nabla^{L})$,
$(E, h^{E}, \nabla^{E})$ be Hermitian vector bundles as above, 
and $g^{TX}$ be
an $J$-compatible Riemannian metric on $TX$.
Let $f, g\in \cC^{\infty}(X, \End(E))$. Then the product of the Toeplitz
operators $T_{f, p}$ and $T_{g, p}$ is a Toeplitz operator, more precisely,
it admits the asymptotic expansion in the sense of \eqref{0.20}:
\begin{align}\label{toe4.2}
T_{f, p}T_{g, p}=\sum^{\infty}_{r=0}p^{-r}T_{C_{r}(f, g), p}
+\mO(p^{-\infty}),
\end{align}
where $C_{r}$ are bidifferential operators,
$C_{r}(f, g)\in \cC^{\infty}(X,\End(E))$
and $C_{0}(f, g)=fg$.

If $f, g\in \cC^{\infty}(X)$, then we have
\begin{align}\label{e:c1d}
C_{1}(f, g)-C_{1}(g, f)=\sqrt{-1}\{f, g\}\Id_{E},
\end{align}
where $\{\cdot, \cdot\}$ is the Poisson bracket on $(X, 2\pi \omega)$, 
and therefore the correspondence principle holds asymptotically,
\begin{equation}\label{e:cp}
[T_{f, g}, T_{g, p}]=\frac{\sqrt{-1}}{p}T_{\{f, g\}, p}+O(p^{-2})
,\:\: p\to\infty.
\end{equation}
\end{thm}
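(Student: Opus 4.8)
The plan is to follow the strategy of Ma--Marinescu \cite{MM08}, replacing the Bergman kernel of the Dirac operator by the generalized Bergman kernel $P_{\mH_p}(x,x')$ of the renormalized Bochner Laplacian, whose off-diagonal decay and full near-diagonal asymptotic expansion are available from \cite{Ma07,Ma08}. The first step is to establish a characterization of Toeplitz operators purely in terms of the asymptotics of their Schwartz kernels: a sequence $\{T_p\}$ satisfying \eqref{0.17} is a Toeplitz operator if and only if, in suitable normal coordinates and trivializations around each point $x_0\in X$, the rescaled kernel $p^{-n}T_p$ admits an asymptotic expansion $\sum_r p^{-r/2}\,Q_{r,x_0}(Z,Z')\kappa^{-1/2}(Z)\kappa^{-1/2}(Z')e^{-p\mathscr{D}(Z,Z')/\cdots}$ whose leading term is of the form $(\,\text{const}\cdot g_0(x_0)\,)\cdot \mathscr{P}(Z,Z')$, with $\mathscr{P}$ the model Bergman kernel on $\C^n$, together with a ``$K$-regularity'' constraint forcing the odd-order terms and a self-reproducing property relating the expansions at nearby points. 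This is the technical heart and I expect it to be the main obstacle: unlike the Dirac case, $\mH_p$ is a sum of eigenspaces with possibly nonzero (but $O(1)$) eigenvalues, so one must check that the spectral gap \eqref{0.5} still yields the needed $\mathscr{O}(p^{-\infty})$ control of $P_{\mH_p}$ off the diagonal and that the model operator computations (the harmonic-oscillator kernel on $\C^n$) go through unchanged; the relevant expansion is precisely the one proved in \cite{Ma08}, so the obstacle is really bookkeeping rather than new analysis.

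Granting this characterization, the proof of \eqref{toe4.2} proceeds as follows. One computes the kernel of $T_{f,p}T_{g,p}=P_{\mH_p}fP_{\mH_p}gP_{\mH_p}$ by inserting the near-diagonal expansion of $P_{\mH_p}$ and performing the resulting Gaussian integrals over the intermediate variable; because of the exponential decay $e^{-c\sqrt p|Z-Z'|}$ of $P_{\mH_p}(Z,Z')$ away from the diagonal, the integral localizes near $x_0$ and is evaluated by the stationary-phase/Gaussian-moment method on $\C^n$. This produces an expansion of the kernel of $T_{f,p}T_{g,p}$ of exactly the form prescribed by the characterization, with the coefficients given by universal polynomials in the derivatives of $f$, $g$ and the geometric data (metric, curvature, $\Phi$, Levi-Civita connection) evaluated at $x_0$; these are the bidifferential operators $C_r$. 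The leading term comes from the reproducing identity $\int_{\C^n}\mathscr{P}(Z,W)\mathscr{P}(W,Z')\,dW=\mathscr{P}(Z,Z')$ and gives $C_0(f,g)=fg\,\Id_E$; one then invokes the characterization in the reverse direction to conclude that $T_{f,p}T_{g,p}$ is genuinely a Toeplitz operator, i.e. that there exist global sections $C_r(f,g)\in\cC^\infty(X,\End(E))$ realizing this expansion as $\sum_r p^{-r}T_{C_r(f,g),p}+\mathscr{O}(p^{-\infty})$. (A small point: the half-integer powers $p^{-r/2}$ at the kernel level reorganize into integer powers $p^{-r}$ at the operator level because the odd terms are forced to vanish by the parity/$K$-regularity built into the characterization.)

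For the commutator formula \eqref{e:c1d}--\eqref{e:cp}, restrict to $f,g\in\cC^\infty(X)$ and extract the order-$p^{-1}$ term $C_1(f,g)$ from the Gaussian integral computation above. The key sub-step is to compute $C_1$ explicitly enough to identify its antisymmetric part: using the second-order near-diagonal expansion of $P_{\mH_p}$ (the coefficient $\mathscr{P}_1$ in the notation of \cite{Ma08}, whose explicit form is known) and Taylor-expanding $f$ and $g$ to second order at $x_0$, the Gaussian moments on $\C^n$ produce $C_1(f,g)=fg\,\Id_E$ plus a first-order-in-each-argument term whose antisymmetrization reduces, after the model-space integration, to the contraction of $df\wedge dg$ with the inverse of the curvature form $R^L$; since \eqref{0.1} identifies $\tfrac{\sqrt{-1}}{2\pi}R^L$ with $\omega$, this contraction is exactly $\sqrt{-1}\{f,g\}$ for the Poisson bracket associated to $2\pi\omega$, giving \eqref{e:c1d}. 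Finally \eqref{e:cp} is immediate: by \eqref{toe4.2}, $[T_{f,p},T_{g,p}]=\sum_{r\ge1}p^{-r}T_{C_r(f,g)-C_r(g,f),p}+\mathscr{O}(p^{-\infty})$, and since $C_0(f,g)-C_0(g,f)=fg-gf=0$, the leading term is $p^{-1}T_{C_1(f,g)-C_1(g,f),p}=\tfrac{\sqrt{-1}}{p}T_{\{f,g\},p}$, with the remainder $O(p^{-2})$.
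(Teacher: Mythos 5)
Your proposal follows essentially the same route as the paper: establish a kernel-level criterion characterizing Toeplitz operators by the asymptotics of their Schwartz kernels (the paper's Theorem \ref{t4.1}), expand the kernel of $T_{f,p}T_{g,p}$ by inserting the generalized Bergman kernel expansion and composing the model Gaussian kernels, verify the hypotheses of the criterion, and identify $C_0$ and the antisymmetric part of $C_1$ exactly as in \cite{MM08}. The one point you gloss over is that the expansion of Theorem \ref{t22} holds only on a domain $|Z|,|Z'|<bp^{-1/2+\theta}$ that shrinks as more terms are taken, which is the main technical novelty here and forces the careful bookkeeping of Condition A and the choice of $k\gg k_0$ in Lemma \ref{t3.4} --- but your assessment that this is bookkeeping rather than new analysis is fair.
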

\begin{cor}
Let $f,g\in\cC^\infty(X,\End(E))$. Set
\begin{equation}\label{toe4.4c}
f*g:=\sum_{k=0}^\infty C_k(f,g) \hbar^{k}
\in\cC^\infty(X,\End(E))[[\hbar]].
\end{equation}
where $C_{r}(f,g)$ are determined by \eqref{toe4.2}.
Then \eqref{toe4.4c} defines an associative star-product on 
$\cC^\infty(X,\End(E))$.
\end{cor}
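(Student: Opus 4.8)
The plan is to deduce associativity of $*$ from the associativity of composition of operators, combined with the uniqueness of the symbol expansion of a Toeplitz operator. By Theorem~\ref{t0}, for $f,g\in\cC^{\infty}(X,\End(E))$ the product $T_{f,p}T_{g,p}$ is again a Toeplitz operator with coefficients $C_{r}(f,g)$; iterating this, any finite product of Toeplitz operators attached to fixed smooth sections of $\End(E)$ is a Toeplitz operator, and keeping track of the powers $p^{-r}$ expresses its coefficients as sums of iterated $C_{r}$'s. The additional ingredient, which comes out of the near-diagonal asymptotics of the generalized Bergman kernel $P_{\mH_{p}}(x,x')$ used in the proof of Theorem~\ref{t0} (equivalently, from the fact that $\|T_{f,p}\|$ does not tend to $0$ when $f\neq0$), is the uniqueness statement: if $\{T_{p}\}$ is a Toeplitz operator in the sense of Definition~\ref{d3.1} with $\|T_{p}\|=\mO(p^{-k-1})$, then $g_{0}=\cdots=g_{k}=0$. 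In particular the coefficients $g_{l}$ of a Toeplitz operator are uniquely determined.

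Granting this, I would expand $(T_{f,p}T_{g,p})T_{h,p}$ and $T_{f,p}(T_{g,p}T_{h,p})$ as Toeplitz operators. For the first, write $T_{f,p}T_{g,p}=\sum_{r=0}^{N}p^{-r}T_{C_{r}(f,g),p}+\mO(p^{-N-1})$; multiply on the right by $T_{h,p}$, whose operator norm is $\mO(1)$ (indeed $\leqslant\sup_{x}|h(x)|$ since $\|P_{\mH_{p}}\|=1$), apply Theorem~\ref{t0} to each product $T_{C_{r}(f,g),p}T_{h,p}=\sum_{s\geqslant0}p^{-s}T_{C_{s}(C_{r}(f,g),h),p}+\mO(p^{-\infty})$, and collect terms of total order $t=r+s$. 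Setting $A_{t}:=\sum_{r+s=t}C_{s}(C_{r}(f,g),h)$ and $B_{t}:=\sum_{r+s=t}C_{s}(f,C_{r}(g,h))$, both lying in $\cC^{\infty}(X,\End(E))$, this gives
\[
(T_{f,p}T_{g,p})T_{h,p}=\sum_{t=0}^{N}p^{-t}T_{A_{t},p}+\mO(p^{-N-1}),\qquad
T_{f,p}(T_{g,p}T_{h,p})=\sum_{t=0}^{N}p^{-t}T_{B_{t},p}+\mO(p^{-N-1}).
\]
Since composition of operators is associative, the two left-hand sides coincide for every $p$; hence $\sum_{t=0}^{N}p^{-t}T_{A_{t}-B_{t},p}=\mO(p^{-N-1})$, and the uniqueness statement above forces $A_{t}=B_{t}$ for all $t\leqslant N$. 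As $N$ is arbitrary, $A_{t}=B_{t}$ for every $t\geqslant0$, i.e.
\[
\sum_{r+s=t}C_{s}\big(C_{r}(f,g),h\big)=\sum_{r+s=t}C_{s}\big(f,C_{r}(g,h)\big)\qquad\text{for all }t\geqslant0,
\]
which is exactly $(f*g)*h=f*(g*h)$ read off coefficient by coefficient in $\hbar$. Combined with $C_{0}(f,g)=fg$ from Theorem~\ref{t0} (so that $*$ is a deformation of the pointwise product, \eqref{e:c1d} supplying the semiclassical term) and with $T_{1,p}T_{f,p}=T_{f,p}=T_{f,p}T_{1,p}$, which by the same uniqueness forces $1*f=f*1=f$, we conclude that \eqref{toe4.4c} defines an associative star-product.

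The main obstacle is the uniqueness of the symbol map; everything else is bookkeeping of asymptotic orders. This uniqueness is not a formal consequence of Definition~\ref{d3.1} alone: it relies on the spectral gap \eqref{0.5} and on the Bergman-type expansion of $P_{\mH_{p}}(x,x')$ to guarantee that a nonzero leading symbol cannot be masked by lower-order contributions. Once this is in place, the only point needing a little care is the uniformity of the remainders when the finitely many Toeplitz operators above are composed, which is immediate since $\|P_{\mH_{p}}\|=1$ and each factor is bounded uniformly in $p$.
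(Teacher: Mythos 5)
Your proposal is correct and follows essentially the same route as the paper, which omits the argument by referring to \cite[Theorem 1.1]{MM08}: associativity of operator composition plus uniqueness of the coefficients $g_l$ in Definition \ref{d3.1}, the latter obtained inductively from the norm limit $\lim_{p\to\infty}\|T_{f,p}\|=\|f\|_\infty$ of Theorem \ref{t2}. You correctly flag that this uniqueness is the non-formal ingredient, and the bookkeeping with the remainders and the unit element is handled properly.
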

\begin{thm}\label{t2}
For any $f\in\cC^\infty(X,\End(E))$ the operator norm of $T_{f,\,p}$
satisfies
\begin{equation}\label{toe4.17}
\lim_{p\to\infty}\norm{T_{f,\,p}}={\norm f}_\infty
:=\sup_{0\neq u\in E_x, x\in X} |f(x)(u)|_{h^{E}}/ |u|_{h^{E}}\,.
\end{equation}
\end{thm}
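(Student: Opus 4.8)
The plan is to establish the two inequalities $\limsup_{p\to\infty}\norm{T_{f,p}}\leqslant\norm{f}_\infty$ and $\liminf_{p\to\infty}\norm{T_{f,p}}\geqslant\norm{f}_\infty$ separately. The first is elementary: the multiplication operator $s\mapsto fs$ on $L^2(X,L^p\otimes E)$ has operator norm $\norm{f}_\infty$ because of the pointwise estimate $|f(x)s(x)|_{h^E}\leqslant\norm{f}_\infty\,|s(x)|_{h^E}$, and $\norm{P_{\mH_p}}\leqslant1$ since $P_{\mH_p}$ is an orthogonal projection, so $\norm{T_{f,p}}=\norm{P_{\mH_p}fP_{\mH_p}}\leqslant\norm{f}_\infty$ for every $p$. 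For the lower bound I would use coherent states built from the generalized Bergman kernel, following the strategy of \cite{MM08} and relying on the asymptotic expansions of \cite{Ma07,Ma08}.

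Concretely, given $\varepsilon>0$ I would choose $x_0\in X$ and $u\in E_{x_0}$ with $|u|_{h^E}=1$ and $|f(x_0)u|_{h^E}\geqslant\norm{f}_\infty-\varepsilon$, set $w=f(x_0)u/|f(x_0)u|_{h^E}$, and fix a unit vector $\xi\in L^p_{x_0}$. For $v\in(L^p\otimes E)_{x_0}$ put $S^p_{x_0,v}:=P_{\mH_p}(\cdot,x_0)v\in\mH_p$. Since $P_{\mH_p}$ is a self-adjoint idempotent, the reproducing identity $\int_X P_{\mH_p}(x_0,x)P_{\mH_p}(x,x_0)\,dv_X(x)=P_{\mH_p}(x_0,x_0)$ gives $\langle S^p_{x_0,v},S^p_{x_0,v'}\rangle=\langle P_{\mH_p}(x_0,x_0)v,v'\rangle$, and the on-diagonal expansion of \cite{Ma07,Ma08} yields $P_{\mH_p}(x_0,x_0)=b_0(x_0)p^n+O(p^{n-1})$ with $b_0(x_0)$ a positive multiple of $\Id_{E_{x_0}}$. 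Hence $s_p:=S^p_{x_0,\xi\otimes u}/\norm{S^p_{x_0,\xi\otimes u}}$ and $t_p:=S^p_{x_0,\xi\otimes w}/\norm{S^p_{x_0,\xi\otimes w}}$ are unit vectors of $\mH_p$, and $\norm{S^p_{x_0,\xi\otimes u}}\,\norm{S^p_{x_0,\xi\otimes w}}=b_0(x_0)p^n\big(1+O(p^{-1})\big)$.

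Because $s_p,t_p\in\mH_p$ we have $\langle T_{f,p}s_p,t_p\rangle=\langle fs_p,t_p\rangle$, which equals
\begin{equation*}
\frac{1}{\norm{S^p_{x_0,\xi\otimes u}}\,\norm{S^p_{x_0,\xi\otimes w}}}\int_X\big\langle f(x)P_{\mH_p}(x,x_0)(\xi\otimes u),\,P_{\mH_p}(x,x_0)(\xi\otimes w)\big\rangle\,dv_X(x).
\end{equation*}
The off-diagonal decay of $P_{\mH_p}(\cdot,x_0)$ — concentration on a $p^{-1/2}$-neighbourhood of $x_0$, with an $O(p^{-\infty})$ contribution from outside a fixed ball — will let me replace $f(x)$ by $f(x_0)$ (transported to $E_x$ by parallel transport along the radial geodesic) up to an $O(p^{-1/2})$ error, after which the near-diagonal rescaled expansion of $P_{\mH_p}$ from \cite{Ma07,Ma08} evaluates the integral: to leading order it equals $\langle f(x_0)u,w\rangle_{h^E}\,b_0(x_0)p^n$, the $E$-factors $u,w$ surviving because the leading term $b_0$ is scalar. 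Dividing, $\langle T_{f,p}s_p,t_p\rangle=|f(x_0)u|_{h^E}+O(p^{-1/2})\geqslant\norm{f}_\infty-\varepsilon+O(p^{-1/2})$, whence $\norm{T_{f,p}}\geqslant|\langle T_{f,p}s_p,t_p\rangle|\geqslant\norm{f}_\infty-\varepsilon+O(p^{-1/2})$; letting $p\to\infty$ and then $\varepsilon\to0$ gives $\liminf_p\norm{T_{f,p}}\geqslant\norm{f}_\infty$, which together with the upper bound proves \eqref{toe4.17}.

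The hard part will be the next-to-last step, evaluating the coherent-state matrix element to leading order: this requires the full near-diagonal asymptotic expansion of the generalized Bergman kernel in K\"ahler-type normal coordinates together with uniform off-diagonal estimates (both supplied by \cite{Ma07,Ma08}), plus some bookkeeping for the bundle $E$ (trivializing $E$ near $x_0$ by parallel transport so that $f(x_0)$ acts on nearby fibres and the replacement $f(x)\approx f(x_0)$ is meaningful). An essentially equivalent but perhaps cleaner route would be to first reduce to the positive self-adjoint case $h=f^{*}f\geqslant0$: since $\norm{h}_\infty=\norm{f}_\infty^2$ and, by \eqref{toe4.2}, $\norm{T_{h,p}}\leqslant\norm{T_{f^{*},p}T_{f,p}}+O(p^{-1})=\norm{T_{f,p}^{*}T_{f,p}}+O(p^{-1})\leqslant\norm{T_{f,p}}^2+O(p^{-1})$, it suffices to show $\liminf_p\norm{T_{h,p}}\geqslant\norm{h}_\infty$, and for that one tests the positive operator $T_{h,p}$ against a single coherent state $s_p$ at a point where the largest eigenvalue of $h$ is attained, using only the on-diagonal and near-diagonal expansions.
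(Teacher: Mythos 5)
Your proposal is correct and follows essentially the same route as the paper: the trivial bound $\norm{T_{f,p}}\leqslant\norm{f}_\infty$ combined with testing against peak (coherent) sections $P_{\mH_p}(\cdot,x_0)v$ at a point where $\norm{f}_\infty$ is attained, using the near-diagonal expansion \eqref{0.28} to replace $f$ by $f(x_0)$ up to $O(p^{-1/2})$. The only cosmetic difference is that you evaluate a matrix element between two coherent states in the directions $u$ and $f(x_0)u/|f(x_0)u|$, whereas the paper estimates $\norm{T_{f,p}S^p_{x_0}-f(x_0)S^p_{x_0}}_{L^2}$ for a single peak section; the two are interchangeable.
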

In the special case when the Riemannian metric $g^{TX}$ is 
associated with $\omega$
we can even calculate $C_1(f,g)$, not only the
difference $C_{1}(f, g)-C_{1}(g, f)$ from \eqref{e:c1d}.
To state the result, let
\begin{equation}
\begin{split}
& (\nabla^{E})^{1, 0}:\cC^{\infty}(X, \End(E))\rightarrow 
\cC^{\infty}(X, T^{*(1,0)}X\otimes\End(E)),\\
&(\nabla^{E})^{0, 1}:\cC^{\infty}(X, \End(E))\rightarrow 
\cC^{\infty}(X, T^{*(0,1)}X\otimes\End(E)).
\end{split}
\end{equation}
be the $(1, 0)$-component and $(0, 1)$-component 
respectively of the connection $\nabla^{E}$, and 
let $\langle\cdot,\cdot\rangle$  denote the pairing induced by 
$g^{TX}$ on $T^*X\otimes \End(E)$ with values in $\End(E)$.

Following an argument of \cite{Ioos}, we get the last result of this paper.

\begin{thm}\label{t3}
If $g^{TX}(\cdot, \cdot)=\omega(\cdot, J\cdot)$,
then  for any $f, g\in\cC^{\infty}(X,\End(E))$, the coefficient 
$C_1(f,g)\in\cC^{\infty}(X,\End(E))$ defined in \eqref{toe4.2} 
is given by 
\begin{equation}\label{c1fg}
C_{1}(f, g)=\big\langle (\nabla^{E})^{1, 0}f, 
(\nabla^{E})^{0, 1}g \big\rangle.
\end{equation}
\end{thm}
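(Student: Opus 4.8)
The plan is to pin down $C_1(f,g)\in\cC^\infty(X,\End(E))$ by evaluating it at an arbitrary point $x_0\in X$, using the near-diagonal asymptotic expansions of the generalized Bergman kernel and of Toeplitz kernels that underlie the proof of Theorem~\ref{t0} (following \cite{Ma07,Ma08,MM08}). Fix $x_0$, choose normal coordinates centred at $x_0$ identifying a neighbourhood with a ball in $\R^{2n}\cong\C^n$, trivialize $L$ and $E$ by parallel transport along radial geodesics, and rescale $Z\mapsto Z/\sqrt p$. For $h\in\cC^\infty(X,\End(E))$ one has, in the $\cC^\infty$-topology on compact sets, an expansion
\[
p^{-n}\,T_{h,p}\!\left(\tfrac{Z}{\sqrt p},\,\tfrac{Z'}{\sqrt p}\right)\ \cong\ \sum_{r\geq 0}\big(\mQ_{r,x_0}(h)\big)(Z,Z')\,p^{-r/2},
\]
where $\mQ_{r,x_0}(h)(Z,Z')$ is a polynomial in $(Z,\ov Z,Z',\ov{Z'})$ with coefficients in $\End(E_{x_0})$, depending on the derivatives of $h$ at $x_0$ up to order $r$ and on the $r$-jet of the geometric data, multiplied by the model Bergman kernel $\cP(Z,Z')$, with $\mQ_{0,x_0}(h)=h(x_0)\,\cP$. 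Multiplying the expansions for $T_{f,p}$ and $T_{g,p}$ produces the expansion of $T_{f,p}T_{g,p}$ with coefficients $\sum_{r_1+r_2=r}\mQ_{r_1,x_0}(f)\circ\mQ_{r_2,x_0}(g)$, where $(A\circ B)(Z,Z')=\int_{\C^n}A(Z,W)B(W,Z')\,dW$ is the composition in the model and $\cP\circ\cP=\cP$. On the other hand, the right-hand side of \eqref{toe4.2} has expansion $\sum_{j,s}\mQ_{s,x_0}(C_j(f,g))\,p^{-j-s/2}$. Comparing the coefficients of $p^{-1}$ on the two sides (only $(j,s)=(0,2)$ and $(1,0)$ contribute to the latter) and using $C_0(f,g)=fg$, one obtains
\[
C_1(f,g)(x_0)\,\cP\ =\ \mQ_{0,x_0}(f)\circ\mQ_{2,x_0}(g)+\mQ_{1,x_0}(f)\circ\mQ_{1,x_0}(g)+\mQ_{2,x_0}(f)\circ\mQ_{0,x_0}(g)-\mQ_{2,x_0}(fg);
\]
evaluating at $Z=Z'=0$ and dividing by $\cP(0,0)\neq 0$ then reads off $C_1(f,g)(x_0)$.

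The role of the hypothesis $g^{TX}(\cdot,\cdot)=\om(\cdot,J\cdot)$ is to make this right-hand side computable in closed form: together with \eqref{0.1} it forces the eigenvalues of $R^L$ relative to $g^{TX}$ at $x_0$ to coincide, so the model operator is the standard one, $\cP$ is the Bargmann--Fock reproducing kernel --- holomorphic in $Z$, antiholomorphic in $Z'$ --- the composition $\circ$ obeys a simple creation/annihilation calculus, and the metric-dependent weights that would otherwise appear collapse into the single pairing $\langle\cdot,\cdot\rangle$. Following \cite{Ioos}, it pays to first record two structural facts that sharply cut down the bookkeeping. First, each $C_r$ is a bidifferential operator of bidegree $(\leq r,\leq r)$, so $C_1$ has bidegree $(\leq 1,\leq 1)$. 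Second, since $T_{\Id_E,p}=P_{\mH_p}$ is a projection, $T_{f,p}T_{\Id_E,p}=T_{f,p}=T_{\Id_E,p}T_{f,p}$ forces $C_0(f,\Id_E)=f=C_0(\Id_E,f)$ and $C_r(f,\Id_E)=C_r(\Id_E,f)=0$ for $r\geq1$; in particular $C_1(f,\Id_E)=C_1(\Id_E,f)=0$, so every term of $C_1(f,g)$ carries at least one derivative of $f$ and at least one of $g$. Hence $C_1(f,g)=\langle\Theta,\nabla^Ef\otimes\nabla^Eg\rangle$ for a section $\Theta$ of $TX\otimes TX$, the two contracted $\End(E)$-factors being multiplied with $\nabla^Ef$ on the left; \eqref{e:c1d} already determines the skew-symmetric part of $\Theta$ on scalar arguments, and only its symmetric part genuinely needs the model computation.

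It remains to read off the shape \eqref{c1fg} from the displayed identity for $C_1(f,g)(x_0)$. In the combination $\mQ_{2,x_0}(f)\circ\mQ_{0,x_0}(g)+\mQ_{0,x_0}(f)\circ\mQ_{2,x_0}(g)-\mQ_{2,x_0}(fg)$, the contributions linear in a second covariant derivative of $f$ or of $g$ cancel through the Leibniz rule $(\nabla^E)^2(fg)=((\nabla^E)^2f)\,g+2\,\nabla^Ef\cdot\nabla^Eg+f\,(\nabla^E)^2g$, while the pieces in which $f$ or $g$ is undifferentiated must drop out by the structural fact $C_1(\cdot,\Id_E)=C_1(\Id_E,\cdot)=0$; there remains from $-\mQ_{2,x_0}(fg)$ only a cross term in $\nabla^Ef\cdot\nabla^Eg$. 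Adding $\mQ_{1,x_0}(f)\circ\mQ_{1,x_0}(g)$ and the remaining geometric contributions, which cancel precisely because $g^{TX}(\cdot,\cdot)=\om(\cdot,J\cdot)$, one is left with a contraction of $\nabla^Ef$ against $\nabla^Eg$; since $\cP$ is holomorphic in its first variable and antiholomorphic in its second, the composition $\circ$ pairs the $(0,1)$-part of a derivative entering from the left factor with the $(1,0)$-part of one entering from the right, and tracking this through the Fock calculus shows the surviving contraction to be exactly $\langle(\nabla^E)^{1,0}f,(\nabla^E)^{0,1}g\rangle$ with total coefficient $1$, i.e.\ \eqref{c1fg}. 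The main obstacle is this final step: one must compute the order-two coefficients $\mQ_{2,x_0}(h)$ explicitly enough to see all the cancellations, propagate the holomorphic/antiholomorphic splitting correctly through the convolution $\circ$, and check that the leftover constant is exactly $1$ rather than some multiple of $\mu_0$ or of $2\pi$. A reassuring consistency check is that skew-symmetrizing the resulting formula reproduces \eqref{e:c1d}, and that for $E=\C$ with $(X,\om)$ K\"ahler it recovers the coefficient of the classical Berezin--Toeplitz star product of \cite{BMS94,Schlich:00}.
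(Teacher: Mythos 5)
Your reduction coincides with the paper's: comparing coefficients of $p^{-1}$ in the two expansions of $T_{f,p}T_{g,p}$ yields exactly \eqref{Q2(f,g)-Q2(fg)}, i.e.\ $C_1(f,g)(x_0)=Q_2(f,g)(0,0)-Q_2(fg)(0,0)$ with $Q_2(f,g)$ as in \eqref{Q}, and your structural observations ($C_1(f,\Id_E)=C_1(\Id_E,f)=0$ from $T_{\Id_E,p}=P_{\mH_p}$, the Leibniz cancellation of second derivatives, the skew part being fixed by \eqref{e:c1d}) are correct and genuinely useful. But the argument stops exactly where the content of the theorem lies. Everything that distinguishes \eqref{c1fg} from ``some contraction of $\nabla^Ef$ against $\nabla^Eg$'' --- that only the $(1,0)$--$(0,1)$ pairing survives, in that order, with coefficient $1$ --- is asserted (``tracking this through the Fock calculus shows\dots'') rather than derived, and you flag this yourself as the main obstacle. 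The soft facts cannot supply it: they leave the tensor $\Theta$ in $C_1(f,g)=\langle\Theta,\nabla^Ef\otimes\nabla^Eg\rangle$ undetermined except for its skew part on scalar arguments, and the remaining part --- which could a priori contain a symmetric $\langle df,dg\rangle$ contribution or a $\langle(\nabla^E)^{0,1}f,(\nabla^E)^{1,0}g\rangle$ term --- is precisely what must be computed.

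The missing input is the explicit second-order data of the rescaled operator, which is how the paper (following \cite{Ioos} and \cite{Ma08}) closes the computation: the expansion \eqref{expLt2} of $\cL_t$; the closed formulas \eqref{J1}--\eqref{J2} for $\cF_{1,x}$ and $\cF_{2,x}$ in terms of $\mO_1$, $\mO_2$ and $(\cL_0)^{-1}\cP^{\perp}$, together with \eqref{POP=0} and the commutation of $\mO_1$ with $\End(E_x)$ (Lemma \ref{J}); and above all the evaluation identities \eqref{L1} of Lemma \ref{L}, which annihilate almost every term of $Q_2(f,g)(0,0)-Q_2(fg)(0,0)$ once the kernels are evaluated at $Z=Z'=0$. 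What survives is then computed by the Bargmann--Fock creation/annihilation calculus and gives \eqref{c1fg}. Without these identities (or an equivalent explicit computation of $J_{1,x}$, $J_{2,x}$ and of the convolution $\cK[\cdot,\cdot]$), one can verify neither the claimed cancellation of the ``remaining geometric contributions'' under $g^{TX}=\omega(\cdot,J\cdot)$ nor the normalization of the final coefficient. The architecture of your proof is right, but the theorem is not yet proved.
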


Note that this formula is clearly compatible with the formula \eqref{e:c1d} 
for the Poisson bracket in the case $f, g\in\cC^{\infty}(X)$. 
Note also that \eqref{c1fg} is a direct generalization of the formula
\cite[(0.20)]{Ma12} for K\"ahler manifolds.

We completed this paper a while ago.
Recently Kordyukov informed us
about his preprint \cite{Kor:17} in which 
 the Berezin-Toeplitz quantization by eigenstates of the Bochner-Laplacian
is reconsidered.

The organization of the paper is as follows. In Section \ref{s1}, we
recall the asymptotic expansion of the generalized Bergman kernel
obtained in \cite{LMM16}. In Section \ref{s2}, we obtain the asymptotic
expansion of the kernel of a Toeplitz operator.
In Section \ref{s3}, we show that the asymptotic expansion is also
a sufficient condition for a family to be Toeplitz.
In Section \ref{s4}, we conclude that the set of Toeplitz operators
forms an algebra.
In Section \ref{s6} we prove Theorem \ref{t3}.

\section{The asymptotic expansion of the generalized 
Bergman kernel}\label{s1}

Let $a^{X}$ be the injectivity radius of $(X, g^{TX})$.
Let $d(x, y)$ denote the Riemannian distance from $x$ to $y$ on 
$(X, g^{TX})$.
By \cite[Proposition 8.3.5]{Ma07} (cf.\ also \cite[(1.11)]{LMM16}), 
we have the following far off-diagonal behavior of the generalized Bergman
kernel.
\begin{prop}\label{t1}
For any $b>0$ and any $k, l\in \N$ and $0<\theta<1$, there exists
$C_{b, k, l, \theta}>0$ such that
\begin{align}\label{0.7}
\Big|P_{\mH_{p}}(x, x')\Big|_{\cC^{k}(X\times X)}\leqslant
C_{b, k, l, \theta}\, p^{-l},
\ \
\textup{for}\ d(x, x')>b p^{-\frac{\theta}{2}},
\end{align}
here the $\cC^{k}$-norm is induced by 
$\nabla^{L}, \nabla^{E}, h^{L}, h^{E}$ and $g^{TX}$.
\end{prop}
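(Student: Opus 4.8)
The plan is to derive the estimate \eqref{0.7} from finite propagation speed of the wave operator, combined with the spectral gap \eqref{0.5}, as in \cite{MM08,Ma07,Ma08,LMM16}.

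First I would pass to the self-adjoint operator $\cD_p:=(\Delta_{p,\Phi}+C_L+1)^{1/2}$, which satisfies $\cD_p\geqslant 1$, has the same eigenspaces as $\Delta_{p,\Phi}$, and, by \eqref{0.5} (and $\mu_0>0$), has $\textup{Spec}(\cD_p)\subset[1,\sqrt{2C_L+1}\,]\cup[\sqrt{2\mu_0p+1}\,,+\infty)$ for $p$ large; thus $\mH_p$ is exactly the spectral subspace of $\cD_p$ for eigenvalues in $[1,\sqrt{2C_L+1}\,]$. Then I would fix an even $\chi\in\cC^{\infty}_c(\R,[0,1])$ with $\chi\equiv 1$ on $[-\tfrac{1}{2},\tfrac{1}{2}]$ and $\supp\chi\subset[-1,1]$, and set $\varphi_p(a):=\chi(a/\sqrt{\mu_0p})$. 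Then $\varphi_p\equiv 1$ on $[1,\sqrt{2C_L+1}\,]$ while $\supp\varphi_p\subset[-\sqrt{\mu_0p}\,,\sqrt{\mu_0p}\,]$ misses the upper part of $\textup{Spec}(\cD_p)$, so that $\varphi_p(\cD_p)=P_{\mH_p}$. The point of this choice is that $\varphi_p$ varies on scale $\sqrt p$, so that its Fourier transform $\widehat{\varphi_p}(t)=\sqrt{\mu_0p}\,\widehat{\chi}(\sqrt{\mu_0p}\,t)$ is concentrated in $|t|\lesssim p^{-1/2}$.

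The second step is localization by finite propagation speed. Since $\varphi_p$ is even, $\varphi_p(\cD_p)=\frac{1}{2\pi}\int_{\R}\widehat{\varphi_p}(t)\cos(t\cD_p)\,dt$; and because $\Delta_{p,\Phi}$ has principal symbol $|\xi|^{2}_{g^{TX}}$, independent of $p$, the operator $\cos(t\cD_p)$ propagates with unit speed, i.e.\ its kernel with respect to $dv_X$ vanishes for $d(x,x')>|t|$ (energy estimate for $\partial_t^2u+(\Delta_{p,\Phi}+C_L+1)u=0$; cf.\ \cite[Appendix~D]{Ma07}). Next I would fix $0<\theta<1$, put $\var_p:=\tfrac{1}{2}b\,p^{-\theta/2}$, choose an even $\eta\in\cC^{\infty}_c((-1,1))$ with $\eta\equiv 1$ near $0$, and define $\widetilde{\varphi}_p$ by $\widehat{\widetilde{\varphi}_p}=\eta(\,\cdot\,/\var_p)\,\widehat{\varphi_p}$. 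Since $\widehat{\widetilde{\varphi}_p}$ is supported in $[-\var_p,\var_p]$, the kernel of $\widetilde{\varphi}_p(\cD_p)$ vanishes for $d(x,x')>\var_p$, hence for $d(x,x')>b\,p^{-\theta/2}$; so on that region $P_{\mH_p}(x,x')$ agrees with the kernel of the error operator $R_p:=(\varphi_p-\widetilde{\varphi}_p)(\cD_p)$.

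The remaining, and main, point is that $R_p$ is negligible. Integrating by parts $2m$ times in $a^{2m}(\varphi_p-\widetilde{\varphi}_p)(a)=\frac{(-1)^{m}}{2\pi}\int_{\R}\partial_t^{2m}\bigl[(1-\eta(t/\var_p))\widehat{\varphi_p}(t)\bigr]e^{ita}\,dt$, and using $|\partial_t^{\ell}\widehat{\varphi_p}(t)|\lesssim p^{(\ell+1)/2}(\sqrt p\,|t|)^{-N}$ on $\{|t|\gtrsim\var_p\}$ together with $\var_p^{-j}\lesssim p^{j\theta/2}$, one obtains $\sup_{a\in\R}|a^{2m}(\varphi_p-\widetilde{\varphi}_p)(a)|=\mO(p^{-\infty})$ for every $m$; the hypothesis $\theta<1$ is used here precisely to guarantee $p^{-1/2}\ll\var_p\sim p^{-\theta/2}$, so that truncating $\widehat{\varphi_p}$ to $[-\var_p,\var_p]$ costs nothing. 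As $\cD_p\geqslant 1$, this gives $\norm{\cD_p^{m_1}R_p\cD_p^{m_2}}=\mO(p^{-\infty})$ for all $m_1,m_2$, and, $\cD_p$ being a first-order elliptic operator bounded below by $1$, Sobolev embedding on $X\times X$ (losing a fixed power of $p$ per derivative because of $\nabla^{L^p}$) converts these into $|R_p(x,x')|_{\cC^{k}(X\times X)}=\mO(p^{-\infty})$ for every $k$. Together with $P_{\mH_p}=\widetilde{\varphi}_p(\cD_p)+R_p$ this yields \eqref{0.7}. An alternative for the intermediate range $b\,p^{-\theta/2}<d(x,x')<\var_0$, with $\var_0>0$ fixed and small, would be to invoke the near-diagonal asymptotic expansion of $P_{\mH_p}(x,x')$ from \cite{Ma07,Ma08,LMM16}, every term of which carries a factor $e^{-c\,p\,d(x,x')^{2}}\leqslant e^{-cb^{2}p^{1-\theta}}=\mO(p^{-\infty})$, together with the plain finite-propagation-speed argument (with a fixed $\var_0$) for $d(x,x')\geqslant\var_0$.
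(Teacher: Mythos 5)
Your argument is correct and is essentially the proof that the paper delegates to its references: Proposition \ref{t1} is quoted from \cite[Proposition 8.3.5]{Ma07} and \cite{LMM16}, where it is established exactly by combining the spectral gap \eqref{0.5} with the rescaled even cut-off $\chi(a/\sqrt{\mu_0 p})$, the Fourier--cosine representation, finite propagation speed for the wave equation of $\Delta_{p,\Phi}+C_L+1$, and Sobolev embedding (with the fixed loss of powers of $p$ per derivative) to pass from $\mO(p^{-\infty})$ operator-norm bounds to pointwise $\cC^k$ kernel bounds. The hypothesis $0<\theta<1$ enters precisely where you use it, namely to ensure that the truncation scale $p^{-\theta/2}$ dominates the concentration scale $p^{-1/2}$ of $\widehat{\varphi_p}$.
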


Let $\var\in (0, a^X/4)$ be fixed. We denote by $B^{X}(x, \var)$ and
$B^{T_{x}X}(0, \var)$ the open balls
in $X$ and $T_{x}X$ with center $x$ and radius $\var$, respectively.
We identify $B^{T_{x}X}(0, \var)$ with $B^{X}(x, \var)$
by using the exponential map of $(X, g^{TX})$.

Let $x_0\in X$.
 For $Z\in B^{T_{x_0}X}(0,\var)$ we identify
 $(L_Z, h^L_Z)$, $(E_Z, h^E_Z)$ and $(L^p\otimes E)_Z$
 to $(L_{x_0},h^L_{x_0})$, $(E_{x_0},h^E_{x_0})$ and $(L^p\otimes E)_{x_0}$
 by parallel transport with respect to the connections
 $\nabla ^L$, $\nabla ^E$ and $\nabla^{L^p\otimes E}$ along the curve
 $\gamma_Z :[0,1]\ni u \to \exp^X_{x_0} (uZ)$.
This is the basic trivialization we use in this paper.

Using this trivialization we identify $f\in \cC^\infty(X,\End(E))$ to a family
$\{f_{x_0}\}_{x_0\in X}$ where $f_{x_0}$ is the function $f$ in
normal coordinates near $x_0$, i.\,e.,
$f_{x_0}:B^{T_{x_0}X}(0,\var)\to\End(E_{x_0})$,
$f_{x_0}(Z)=f\circ\exp^X_{x_0}(Z)$\,.
In general, for functions expressed in normal coordinates centered at $x_0\in X$
we will add a subscript $x_0$ to indicate the base point $x_0$.

Similarly,
$P_{\mH_p}(x,x')$ induces in terms of the basic trivialization a smooth section
\[
(Z,Z')\longmapsto P_{\mH_p,\,x_0}(Z,Z')
\]
of $\pi ^* \End(E)$ over $\{(Z,Z')\in TX\times_{X} TX:|Z|,|Z'|<\var\}$,
which depends smoothly on $x_0$. Here 
$\pi: TX\times_{X} TX\rightarrow X$ is the natural projection from
the fibered product $TX\times_{X} TX$ on $X$ and
we identify a section
$S\in \cC^\infty \big(TX\times_{X}TX,\pi ^* \End (E)\big)$
 with the family $(S_x)_{x\in X}$, where
 $S_x=S|_{\pi^{-1}(x)}$. 

Let $dv_{TX}$ be the Riemannian volume form on 
$(T_{x_{0}}X, g^{T_{x_{0}}X})$.
Let $\kappa_{x_{0}}(Z)$ be the smooth positive function defined 
by the equation
\begin{align}\label{0.8}
dv_{X}(Z)=\kappa_{x_{0}}(Z)dv_{TX}(Z),\ \ \kappa_{x_{0}}(0)=1,
\end{align}
where the subscript $x_{0}$ of $\kappa_{x_{0}}(Z)$ indicates
the base point $x_{0}\in X$.

We identify the $2$-form $R^{L}$ with the Hermitian matrix
$\dot{R}^{L}\in\End(T^{(1, 0)}X)$ such that
for any $W, Y\in T^{(1, 0)}X$,
\begin{align}
R^{L}(W, Y)=\big\langle \dot{R}^{L}W, Y\big\rangle.
\end{align}
Choose $\{\omega_{j}\}^{n}_{j=1}$ an orthonormal basis of 
$T_{x_{0}}^{(1, 0)}X$ such that
\begin{align}
\dot{R}^{L}(x_{0})=\textup{diag}(a_{1}, \ldots, a_{n}) \in
\End(T_{x_{0}}^{(1, 0)}X).
\end{align}
We fix an orthonormal basis of $T_{x_{0}}X$ given by
$e_{2j-1}=\frac{1}{\sqrt{2}}(\omega_{j}+\ov{\omega}_{j})$ and
$e_{2j}=\frac{\sqrt{-1}}{\sqrt{2}}(\omega_{j}-\ov{\omega}_{j})$.
Then $z_{j}=Z_{2j-1}+\sqrt{-1}Z_{2j}$ is a complex coordinate of
$Z\in \R^{2n}\simeq (T_{x_{0}}X, J)$.

By \cite[Theorem 2.1]{LMM16} and \cite[Theorem 1.18]{MM08},
we obtain the following version of the off diagonal expansion of
the generalized Bergman kernel.

\begin{thm}\label{t22}
For any $x_{0}\in X$ and $r\in \N$, there exist
polynomials $J_{r, x_{0}}(Z, Z')\in \End(E_{x_{0}})$ in $Z, Z'$ with
the same parity as $r$ and with $\textup{deg} J_{r, x_{0}}\leqslant 3r$ 
such that by setting
\begin{align}\label{0.9}
\cF_{r, x_{0}}(Z, Z')=J_{r, x_{0}}(Z, Z')\cP(Z, Z'),\
\ J_{0, x_{0}}(Z, Z')=\Id_{E_{x_{0}}},
\end{align}
with
\begin{align}\label{p}
\cP(Z, Z')=\exp\Big[-\frac{1}{4}\sum^{n}_{j=1}a_{j}
\big(|z_{j}|^{2} +|z_{j}'|^{2}-2z_{j}\ov{z}'_{j}\big)\Big],
\end{align}
the following statement holds:
for any $b>0$ and $k_{0}, m, m'\in \N$,
there exists $C_{b, k_{0}, m, m'}>0$ such that
for $|\alpha|+|\alpha'|\leqslant m'$ and any
$|Z|, |Z'|<b p^{-\frac{1}{2}+\theta}$
with
\begin{align}\label{0.10}
\theta=\frac{1}{2\big(2n+8+2k_{0}+3m'+2m\big)}\,,
\end{align}
we have
\begin{align}\label{0.11}
\begin{split}
& \bigg|
\frac{\partial^{|\alpha|+|\alpha'|}}
{\partial Z^{\alpha}\partial Z'^{\alpha'}}
\Big(p^{-n}P_{\mH_p, x_{0}}(Z, Z')
-\sum^{k}_{r=0}\cF_{r, x_{0}}(\sqrt{p}Z, \sqrt{p}Z')
   \kappa_{x_{0}}^{-1/2}(Z)
\kappa_{x_{0}}^{-1/2}(Z')p^{-\frac{r}{2}}\Big)
\bigg|_{\cC^{m}(X)}\\
& \quad\leqslant
C_{b, k_{0}, m, m'}\,p^{-\frac{k_{0}}{2}-1},
\end{split}\end{align}
where $k=k_{0}+m'+2$ and $\cC^{m}(X)$ is the $\cC^{m}$ norm for
the parameter $x_{0}\in X$.
\end{thm}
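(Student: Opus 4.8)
The plan is to deduce Theorem~\ref{t22} as the ``slowly-growing-ball'' form of the near-diagonal expansion of the generalized Bergman kernel, obtained by combining the parabolic rescaling/resolvent technique of \cite[Theorem 2.1]{LMM16} with the patching argument of \cite[Theorem 1.18]{MM08} (the latter written for the $\mathrm{spin}^{c}$ Dirac operator but carrying over verbatim with $\Delta_{p,\Phi}$ in its place); below I sketch the mechanism. First I would fix $x_0\in X$, transport $\Delta_{p,\Phi}$ via the basic trivialization to an operator on sections over $B^{T_{x_0}X}(0,\var)$, extend its coefficients to all of $\R^{2n}\simeq T_{x_0}X$ (the extension being harmless outside the ball, by finite propagation speed together with Proposition~\ref{t1}), and conjugate by $\kappa_{x_0}^{1/2}$ and by the dilation $S_t\colon s(Z)\mapsto s(Z/t)$ with $t=p^{-1/2}$. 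This produces a family of formally self-adjoint elliptic operators $\cL_t^{x_0}=\kappa_{x_0}^{1/2}S_t^{-1}(t^2\Delta_{p,\Phi})S_t\kappa_{x_0}^{-1/2}$ on $\R^{2n}$ admitting an expansion $\cL_t^{x_0}=\cL_0^{x_0}+\sum_{i\geqslant1}t^i\,\cO_{i,x_0}+\cdots$, uniformly in $x_0$, where the $\cO_{i,x_0}$ have order $\leqslant2$ and polynomial coefficients of controlled degree and $\cL_0^{x_0}$ is the model harmonic oscillator attached to $\dot{R}^{L}(x_0)=\textup{diag}(a_1,\dots,a_n)$. The renormalization $-\tau p$ is tuned precisely so that $0$ is the lowest eigenvalue of $\cL_0^{x_0}$, isolated with gap $\geqslant2\mu_0$, and the Mehler formula identifies the Schwartz kernel of the spectral projection of $\cL_0^{x_0}$ onto $\Ker\cL_0^{x_0}$ with $\cP(Z,Z')\Id_{E_{x_0}}$, which is the source of \eqref{p}.

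Next I would extract the terms $\cF_{r,x_0}$. By the gap in \eqref{0.5}, for $p$ large a fixed small circle $\delta\subset\C$ about $0$ encloses exactly $\textup{Spec}(\Delta_{p,\Phi})\cap[-C_L,C_L]$, so that
\[
P_{\mH_p}=\frac{1}{2\pi\sqrt{-1}}\oint_\delta(\lambda-\Delta_{p,\Phi})^{-1}\,d\lambda ,
\]
and the Schwartz kernel $P_t^{x_0}(Z,Z')$ of $\frac{1}{2\pi\sqrt{-1}}\oint_\delta(\lambda-\cL_t^{x_0})^{-1}d\lambda$ equals $p^{-n}\kappa_{x_0}^{1/2}(Z/\sqrt{p})\,P_{\mH_p,x_0}(Z/\sqrt{p},Z'/\sqrt{p})\,\kappa_{x_0}^{1/2}(Z'/\sqrt{p})$. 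Substituting the Neumann expansion of $(\lambda-\cL_t^{x_0})^{-1}$ in powers of $t$, whose $t^r$-coefficient is a sum of operators $(\lambda-\cL_0^{x_0})^{-1}\cO_{i_1,x_0}(\lambda-\cL_0^{x_0})^{-1}\cdots\cO_{i_j,x_0}(\lambda-\cL_0^{x_0})^{-1}$ with $i_1+\dots+i_j=r$, and doing the contour integral, one gets kernels $\cF_{r,x_0}(Z,Z')$; since $(\lambda-\cL_0^{x_0})^{-1}$ preserves the shape ``polynomial times $\cP$'' and the $\cO_i$ raise polynomial degree and flip $Z$-parity by at most their index, one obtains $\cF_{r,x_0}=J_{r,x_0}\cP$ with $J_{0,x_0}=\Id_{E_{x_0}}$, $\deg J_{r,x_0}\leqslant3r$ and $J_{r,x_0}$ of parity $r$, which is \eqref{0.9}.

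The quantitative statement then follows by running the Bismut--Lebeau/Dai--Liu--Ma analysis in weighted Sobolev spaces: I would establish bounds on $(\lambda-\cL_t^{x_0})^{-1}$ between such spaces that are uniform in $t$ small, $\lambda\in\delta$ and $x_0\in X$, then differentiate the whole construction in $x_0$ up to order $m$ and in $Z,Z'$ up to order $m'$, and invoke Sobolev embedding. The remainder after $k$ terms comes out of shape $C\,p^{-(k+1)/2}\,(1+|\sqrt{p}Z|+|\sqrt{p}Z'|)^{N}$ with $N=N(n,m,m',k_0)$ (a Gaussian gain in $|\sqrt{p}Z|,|\sqrt{p}Z'|$ is also available but not needed here); choosing $k=k_0+m'+2$ (leaving room for the $m'$ unrescaled derivatives, together costing $p^{m'/2}$, and for the $\kappa^{1/2}$-factors) and $\theta$ as in \eqref{0.10}, so that $N\theta$ is small enough, makes this $\leqslant C\,p^{-k_0/2-1}$ on $|Z|,|Z'|<bp^{-1/2+\theta}$. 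In the intermediate range of $d(x,x')$ where the localized model no longer applies directly but before the far off-diagonal regime is reached, the Gaussian factor $\cP$ together with Proposition~\ref{t1} closes the remaining gap.

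The hard part will be this last step: the uniform-in-$x_0$, uniform-in-$t$ resolvent estimates for $\cL_t^{x_0}$ in weighted Sobolev spaces, the control of the remainder of the resolvent expansion there, and the bookkeeping that converts those norms into pointwise bounds on the precise region $|Z|,|Z'|<bp^{-1/2+\theta}$ with the exact exponent \eqref{0.10}. One conceptual subtlety worth flagging is that $\mH_p$ is a direct sum of eigenspaces of $\Delta_{p,\Phi}$, possibly attached to several small nonzero eigenvalues, whereas the model projector projects onto the genuine kernel $\Ker\cL_0^{x_0}$; the contour-integral formalism handles this automatically, since $\delta$ encloses the total projection and the spreading of the small eigenvalues inside $[-C_L,C_L]$ is an $\mO(t)$ effect, entirely encoded in the terms $\cF_{r,x_0}$ with $r\geqslant1$.
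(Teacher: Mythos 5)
Your proposal matches the paper's treatment: Theorem~\ref{t22} is there obtained directly by combining \cite[Theorem 2.1]{LMM16} with the localization/rescaling/resolvent machinery of \cite[Theorem 1.18]{MM08}, which is exactly the route you describe (localization via Proposition~\ref{t1}, rescaling to $\cL_t$, contour-integral expansion around the model harmonic oscillator whose ground-state projector has kernel $\cP$, and uniform weighted Sobolev estimates), including the correct observation that the contour encloses all the small eigenvalues so that the quasi-kernel nature of $\mH_p$ is absorbed into the higher-order terms. The sketch is sound and essentially reproduces the cited proofs.
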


In particular when $m'=0$, the following statement holds:
for any $b>0$ and $k, m\in \N$, there exists $C>0$ such that
for any $|Z|, |Z'|<b p^{-\frac{1}{2}+\theta_{2}}$
with
\begin{align}\label{0.12}
\theta_{2}=\frac{1}{4\big(n+k+m+2\big)}\,,
\end{align}
we have
\begin{align}\begin{split}\label{0.13}
& \bigg|p^{-n}P_{\mH_p, x_{0}}(Z, Z')
-\sum^{k}_{r=0}\cF_{r, x_{0}}(\sqrt{p}Z, \sqrt{p}Z')
\kappa_{x_{0}}^{-1/2}(Z)
\kappa_{x_{0}}^{-1/2}(Z')p^{-\frac{r}{2}}
\bigg|_{\cC^{m}(X)}\leqslant 
Cp^{-k/2}.
\end{split}\end{align}

\noindent
Note that the more expansion term in \eqref{0.13}, the smaller of 
the expansion domain for the variables $Z$ and $Z'$. 
This serves as the main ingredient for the generalized Bergman kernel case.

By \cite[Lemma\,2.2]{MM08}, for any polynomials $F, G\in \C[Z, Z']$
there exist $\cK[F, G]\in \C[Z,Z']$ such that
\begin{align}\label{k1}
\big((F\cP)\circ(G\cP)\big)(Z, Z')=\cK[F, G](Z, Z')\cP(Z, Z').
\end{align}

\section{Asymptotic expansion of Toeplitz operators}\label{s2}

For $f\in \cC^{\infty}(X, \End(E))$ we define the Toeplitz operator
$T_{f, p}$ on $L^{2}(X, L^{p}\otimes E)$ by \eqref{0.6a}.
%
The Schwartz kernel of $T_{f, p}$ is given by
\begin{align}\label{0.22}
T_{f, p}(x, x')=\int_{X}P_{\mH_{p}}(x, x'')f(x'')
P_{\mH_{p}}(x'', x')dv_{X}(x'').
\end{align}
Note that if $f\in \cC^{\infty}(X, \End(E))$ is self-adjoint,
i.e., $f(x)=f(x)^{\ast}$
for all $x\in X$, then the operator $T_{f, p}$ is self-adjoint.

We examine now the asymptotic expansion of the kernel
of Toeplitz operators $T_{f, p}$.
Outside the diagonal of $X\times X$, we have the following analogue 
of \cite[Lemma\,4.2]{MM08}.

\begin{lemma}\label{t3.2}
Let $\theta\in (0, 1)$ and $f\in \cC^{\infty}(X, \End(E))$ fixed.
For any $b>0$ and $k, l\in \N$, there exists $C_{b, k, l}>0$ such that
\begin{align}\label{0.23}
\Big|T_{f, p}(x, x')\Big|_{\cC^{k}(X\times X)}
\leqslant C_{b, k, l} p^{-l},
\end{align}
for all $p\geqslant 1$ and all $(x, x')\in X\times X$
with $d(x, x')>b p^{-\theta}$,
where the $\cC^{k}$-norm is induced by 
$\nabla^{L}, \nabla^{E}, h^{L}, h^{E}$ and $g^{TX}$.
\end{lemma}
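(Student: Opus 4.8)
The plan is to decompose the kernel integral \eqref{0.22} into near-diagonal and far-from-diagonal pieces and estimate each using Proposition \ref{t1}. Fix $(x,x')$ with $d(x,x')>bp^{-\theta}$. For every intermediate point $x''\in X$, the triangle inequality gives $d(x,x'')+d(x'',x')\geqslant d(x,x')>bp^{-\theta}$, so at least one of the two distances exceeds $\tfrac{b}{2}p^{-\theta}$. Accordingly I would split $X=U_1\cup U_2$ where $U_1=\{x'':d(x,x'')>\tfrac{b}{2}p^{-\theta}\}$ and $U_2=\{x'':d(x'',x')>\tfrac{b}{2}p^{-\theta}\}$, and write $T_{f,p}(x,x')=\int_{U_1}+\int_{U_2\setminus U_1}$. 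On $U_1$ the factor $P_{\mH_p}(x,x'')$ is small; on $U_2\setminus U_1$ the factor $P_{\mH_p}(x'',x')$ is small.

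The key step is the pointwise estimate. Since $\theta<1$, for $p$ large we have $bp^{-\theta}\geqslant b'p^{-\theta'/2}$ for a suitable $\theta'\in(0,1)$ and $b'>0$ (indeed any $\theta'$ with $\theta<\theta'/2$, i.e.\ $\theta'>2\theta$, works once $\theta<1/2$; if $\theta\in[1/2,1)$ one first notes that $d(x,x')>bp^{-\theta}$ implies $d(x,x')>bp^{-\theta''}$ for any $\theta''\geqslant\theta$, so we may always arrange to be in the regime of Proposition \ref{t1}). Hence on $U_1$, for any prescribed $k,l$, Proposition \ref{t1} yields $|P_{\mH_p}(x,x'')|_{\cC^k}\leqslant C_{b,k,l',\theta'}\,p^{-l'}$ with $l'$ as large as we wish, while $|f|_{\cC^k}$ is a fixed constant and $|P_{\mH_p}(x'',x')|_{\cC^k}$ is bounded by the on-diagonal estimate (Theorem \ref{t22} with $r=0$, $m'=0$ gives $|P_{\mH_p}(x'',x')|\leqslant Cp^n$, and similarly for the $\cC^k$-norm up to a fixed power of $p$). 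Multiplying the three factors and integrating over $U_1$ against $dv_X$ (whose total mass is $\vol(X)<\infty$) produces a bound $C\,p^{n+N-l'}$ for some fixed $N=N(k)$; choosing $l'=n+N+l$ gives the desired $O(p^{-l})$. The contribution of $U_2\setminus U_1$ is estimated symmetrically, using smallness of $P_{\mH_p}(x'',x')$ instead. To get the $\cC^k$-norm in the variables $(x,x')$ rather than just the sup-norm, one differentiates under the integral sign in \eqref{0.22}: the derivatives fall on the two Bergman kernel factors (and on the volume density $\kappa$), each differentiated factor is still controlled by the $\cC^k$-version of Proposition \ref{t1} on the corresponding region, so the same splitting and the same arithmetic apply. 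A mild technical point is that the regions $U_1,U_2$ depend on $x,x'$, so when differentiating one must either use a smooth cutoff $\chi$ with $\chi\equiv1$ on $\{d(x,x'')\leqslant\tfrac b4 p^{-\theta}\}$ and $\supp\chi\subset\{d(x,x'')\leqslant\tfrac b2 p^{-\theta}\}$ to make the decomposition smooth, or simply absorb the boundary terms, which are themselves exponentially small by the same kernel estimates.

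The main obstacle, such as it is, is bookkeeping rather than conceptual: keeping track of how many powers of $p$ the non-decaying factor (the on-diagonal kernel, together with its derivatives and the fixed section $f$) contributes, so that these are swallowed by the arbitrarily large negative power $p^{-l'}$ supplied by Proposition \ref{t1}. Concretely one must verify that $|P_{\mH_p}(x'',x')|_{\cC^k(X\times X)}\leqslant C_k\,p^{n+\alpha_k}$ for some fixed $\alpha_k$ independent of $p$; this follows from \eqref{0.11} (or already from the general Bergman kernel estimates of \cite{Ma07,Ma08}) since each derivative in the rescaled variables costs at most a factor $\sqrt p$ and there are finitely many of them. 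Once that uniform polynomial bound is in hand, the argument is a direct transcription of \cite[Lemma\,4.2]{MM08} to the present setting, with Proposition \ref{t1} playing the role of the off-diagonal decay of the Bergman kernel.
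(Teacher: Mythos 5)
Your argument is the paper's argument: split the integral \eqref{0.22} over the ball $B^{X}(x,\tfrac b2 p^{-\theta})$ and its complement (your $U_2\setminus U_1$ and $U_1$), apply the far off-diagonal decay of Proposition \ref{t1} to whichever Bergman kernel factor has its arguments separated by at least $\tfrac b2 p^{-\theta}$, and control the remaining factor by the uniform polynomial bound $|P_{\mH_p}(\cdot,\cdot)|_{\cC^{k}}\leqslant C_k p^{M_k}$ coming from \eqref{0.13}; taking the decay order $l'$ large enough then swallows the polynomial growth. This is precisely \eqref{0.24}--\eqref{0.24b}. (Your worry about the non-smooth dependence of $U_1,U_2$ on $(x,x')$ is unnecessary: one differentiates the integrand of \eqref{0.22} first and only then splits the domain of integration for each fixed $(x,x')$, so no cutoff is needed.)

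One caveat: your parenthetical treatment of $\theta\in[1/2,1)$ is logically reversed. The implication $d(x,x')>bp^{-\theta}\Rightarrow d(x,x')>bp^{-\theta''}$ for $\theta''\geqslant\theta$ \emph{weakens} the lower bound on $d(x,x')$ (the threshold $p^{-\theta''}$ is smaller), so it cannot place you in the regime $d(x,x')>b'p^{-\theta'/2}$, $\theta'\in(0,1)$, of Proposition \ref{t1}, which requires the separation to exceed the \emph{larger} quantity $p^{-\theta'/2}\gg p^{-\theta}$. Indeed, for $\theta>1/2$ the asserted estimate is false: if $d(x,x')\sim p^{-\theta}$ then the rescaled separation $\sqrt p\,|Z-Z'|$ tends to $0$, and \eqref{0.29} gives $|T_{f,p}(x,x')|\sim p^{n}|f(x)|$. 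The hypothesis of the lemma should be read with the threshold $bp^{-\theta/2}$, as in Proposition \ref{t1} and as the lemma is actually used in condition (ii) of Theorem \ref{t4.1}; the paper's own proof tacitly assumes this. With that reading, your argument (the case you call $\theta<1/2$) is complete and coincides with the paper's.
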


\begin{proof}
From Proposition \ref{t1} and (\ref{0.13}), we know that
for any $k\in \N$ there exist $C_{k}>0$ and $M_{k}>0$ such that
for all $(x, x')\in X\times X$,
\begin{align}\label{0.24}
\Big|P_{\mH_{p}}(x, x')\Big|_{\cC^{k}(X\times X)}
\leqslant C_{k}p^{M_{k}}.
\end{align}
We split the integral in \eqref{0.22} in a sum of two integrals as follows:
\begin{align}\label{0.24a}
T_{f, p}(x, x')=\Big(\int_{B^{X}(x, \frac{b}{2}p^{-\theta})}
+\int_{X\setminus B^{X}(x, \frac{b}{2}p^{-\theta})}\Big)
P_{\mH_{p}}(x, x'')f(x'')P_{\mH_{p}}(x'', x')dv_{X}(x'').
\end{align}
Assume that $d(x, x')> bp^{-\theta}$. Then
\begin{align}\label{0.24b}
\begin{split}
&d(x'', x')> \frac{b}{2}p^{-\theta}\quad \text{for $x''\in B^{X}(x, \frac{b}{2}p^{-\theta})$},\\
&d(x, x'')\geqslant \frac{b}{2}p^{-\theta}\quad 
\text{for $x''\in X\big\backslash B^{X}(x, \frac{b}{2}p^{-\theta})$}.
\end{split}\end{align}
Now from \eqref{0.7} and \eqref{0.24}-\eqref{0.24b},
we get \eqref{0.23}.
The proof of Lemma \ref{t3.2} is complete.
\end{proof}

We concentrate next on a neighborhood of the diagonal of $X\times X$
in order to
obtain the asymptotic expansion of the kernel $T_{f, p}(x, x')$.

Let $\{\Xi_{p}\}_{p\in \N}$ be a sequence of linear operators
$\Xi_{p}: L^{2}(X, L^{p}\otimes E)\rightarrow L^{2}(X, L^{p}\otimes E)$
with smooth kernel $\Xi_{p}(x, y)$ with respect to $dv_{X}(y)$.
Recall that $\pi: TX\times_{X} TX\rightarrow X$ is the natural projection. 
Under our trivialization,
$\Xi_{p}(x, y)$ induces a smooth section $\Xi_{p, x_{0}}(Z, Z')$ of
$\pi^{\ast}\big(\End(E)\big)$ over $TX\times_{X} TX$ with
$Z, Z'\in T_{x_{0}}X$, $|Z|, |Z'|<a^{X}$. Recall also that
$\cP_{x_{0}}=\cP$ was defined by (\ref{p}).

Consider the following condition for $\{\Xi_{p}\}_{p\in \N}$.
\medskip

\noindent
{\bf Condition A.}
 There exists
a family $\{Q_{r, x_{0}}\}_{r\in \N, x_{0}\in X}$ such that
\\[2pt]
(a) $Q_{r, x_{0}}\in \End(E_{x_{0}})[Z, Z']$;
\\[2pt]
(b) $\{Q_{r, x_{0}}\}_{r\in \N, x_{0}\in X}$ is smooth with
respect to the parameter $x_{0}\in X$ and there exist
$b_{1}, b_{0}\in\N$ such that $\deg Q_{r}\leqslant b_{1}r+b_{0}$;
\\[2pt]
(c) for any $k, m\in \N$, there exists $\theta_{k, m}\in (0, 1/2)$
such that for any $b>0$,
there exist $C_{b, k, m}>0$
such that for every
$x_{0}\in X$, $Z, Z'\in T_{x_{0}}X$ with
$|Z|, |Z'|<b p^{-\frac{1}{2}+\theta_{k, m}}$
and $p\in \N^{\ast}$, the following estimate holds:
\begin{align}\begin{split}\label{0.25}
 & \bigg|p^{-n}\Xi_{p, x_{0}}(Z, Z')\kappa^{1/2}_{x_{0}}(Z)
 \kappa_{x_{0}}^{1/2}(Z')
-\sum^{k}_{r=0}(Q_{r, x_{0}}\cP_{x_{0}})(\sqrt{p}Z, \sqrt{p}Z')p^{-r/2}
\bigg|_{\cC^{m}(X)}
 \leqslant
C_{b, k, m}p^{-k/2}.
\end{split}\end{align}
\medskip

\noindent
{\bf Notation A.} For any $k, m\in \N$ we write
the equation (\ref{0.25})
 for $|Z|, |Z'|<b p^{-\frac{1}{2}+\theta_{k, m}}$ as
\begin{align}\label{0.26}
p^{-n}\Xi_{p, x_{0}}(Z, Z')\cong
\sum^{k}_{r=0}(Q_{r, x_{0}}\cP_{x_{0}})(\sqrt{p}Z, \sqrt{p}Z')p^{-r/2}
+\cO_{m}(p^{-k/2}).
\end{align}

 \begin{rem} \label{t3.3}
 By Theorem \ref{t22}, (\ref{0.12}) and (\ref{0.13}), we have
 \begin{align}\label{0.28}
 p^{-n}P_{\mH_{p}, x_{0}}(Z, Z')\cong
\sum^{k}_{r=0}(J_{r, x_{0}}\cP_{x_{0}})(\sqrt{p}Z, \sqrt{p}Z')p^{-r/2}
+\cO_{m}(p^{-k/2}),
\end{align}
in the sense of Notation A with
\begin{align}
\theta_{k, m}=\frac{1}{4(n+k+m+2)}\ \
\textup{for}\ k, m\in \N,
\end{align}
where $J_{r, x_{0}}(Z, Z')\in \End(E_{x_{0}})$
are the polynomials in $Z, Z'$ defined in \eqref{0.9}. Note that
$J_{r, x_{0}}(Z, Z')$ has the same parity as $r$ and
$\deg J_{r, x_{0}}\leqslant 3r$, $J_{0, x_{0}}=\Id_{E_{x_{0}}}$.
\end{rem}

The following result is about the near diagonal
asymptotic expansion of the kernel $T_{f, p}(x, x')$.
It is a version of \cite[Lemma\,4.6]{MM08} in our situation.

\begin{lemma}\label{t3.4}
Let $f\in \cC^{\infty}(X, \End(E))$. There exists a family
$\{Q_{r, x_{0}}(f)\}_{r\in \N, x_{0}\in X}$ such that
\\[2pt]
\textup{(a)} $Q_{r, x_{0}}(f)\in \End(E_{x_{0}})[Z, Z']$
are polynomials with the same parity as $r$;
\\[2pt]
\textup{(b)} $\{Q_{r, x_{0}}(f)\}_{r\in \N, x_{0}\in X}$ is
smooth with respect to $x_{0}\in X$, and $\deg Q_{r, x_{0}}\leqslant 3r$;
\\[2pt]
\textup{(c)} for any $k_{0}, m\in \N$, we have
\begin{align}\label{0.29}
 p^{-n}T_{f, p, x_{0}}(Z, Z')\cong
\sum^{k_{0}}_{r=0}\big(Q_{r, x_{0}}(f)\cP_{x_{0}}\big)
(\sqrt{p}Z, \sqrt{p}Z')p^{-r/2}
+\cO_{m}(p^{-k_{0}/2}),
\end{align}
in the sense of Notation A with
\begin{align}
\theta_{k_{0}, m}=\frac{1}{4(n+k+m+2)}\ \textup{for some}\ 
k\geqslant k_{0},
\end{align} and $Q_{r, x_{0}}(f)$ are expressed by
\begin{align}\label{30}
Q_{r, x_{0}}(f)=\sum_{r_{1}+r_{2}+|\alpha|=r}\cK\Big[J_{r_{1}, x_{0}},
\frac{\partial^{\alpha}f_{x_{0}}}{\partial Z^{\alpha}}(0)
\frac{Z^{\alpha}}{\alpha!}J_{r_{2}, x_{0}}\Big].
\end{align}
Especially,
\begin{align}\label{3.17}
Q_{0, x_{0}}(f)=f(x_{0})\Id_{E_{x_{0}}}.
\end{align}
\end{lemma}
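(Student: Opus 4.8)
The strategy is to compute the composition of kernels in \eqref{0.22} using the near-diagonal expansion of $P_{\mH_p}$ from Theorem \ref{t22}, in the spirit of \cite[Lemma 4.6]{MM08}. First I would reduce to a neighborhood of the diagonal: by Lemma \ref{t3.2} (and the localization of the projection kernel), the contribution to the integral $\int_X P_{\mH_p}(x,x'')f(x'')P_{\mH_p}(x'',x')\,dv_X(x'')$ coming from $x''$ outside a ball $B^X(x_0,bp^{-1/2+\theta})$ is $\cO(p^{-\infty})$, so after rescaling $Z,Z'$ by $\sqrt p$ it suffices to integrate $x''$ over $|Z''|<bp^{-1/2+\theta}$ in the normal coordinates centered at $x_0$, and further, after substituting $W=\sqrt p Z''$, over $|W|<bp^{\theta}$. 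Using $dv_X(Z'')=\kappa_{x_0}(Z'')\,dv_{TX}(Z'')$ from \eqref{0.8}, the integral becomes, up to a factor $p^{-n/2}\cdot p^{-n/2}$ from the two kernel normalizations and $p^{-n}$ from the volume rescaling, an integral over $T_{x_0}X$ of a product of two expansions of the form $\sum_r \cF_{r,x_0}\,\kappa^{-1/2}\kappa^{-1/2}p^{-r/2}$ against $f_{x_0}(Z'')\kappa_{x_0}^{1/2}(Z'')$; the three $\kappa^{1/2}$'s and the $\kappa^{-1/2}$'s cancel in pairs at $Z''$, leaving the product structure $(J_{r_1,x_0}\cP)(Z,W)\cdot f_{x_0}(W/\sqrt p)\cdot(J_{r_2,x_0}\cP)(W,Z')$.

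The second step is the Taylor expansion $f_{x_0}(W/\sqrt p)=\sum_{|\alpha|\le N}\frac{\partial^\alpha f_{x_0}}{\partial Z^\alpha}(0)\frac{W^\alpha}{\alpha!}p^{-|\alpha|/2}+\cO(p^{-(N+1)/2})$, uniform on $|W|<bp^\theta$ since the domain grows only polynomially while each factor $\cP$ provides Gaussian decay in $W$ that absorbs the polynomial growth; this is exactly the point where the shrinking-domain/growing-domain balance in \eqref{0.10}, \eqref{0.12} is used, and choosing $k\ge k_0$ large enough with $\theta_{k_0,m}=\frac{1}{4(n+k+m+2)}$ accommodates all the error terms and the $\cC^m$-estimate in the parameter $x_0$. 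Inserting this and collecting powers of $p^{-1/2}$, the coefficient of $p^{-r/2}$ is the sum over $r_1+r_2+|\alpha|=r$ of integrals $\int_{T_{x_0}X}(J_{r_1,x_0}\cP)(\sqrt pZ,W)\,\frac{\partial^\alpha f_{x_0}}{\partial Z^\alpha}(0)\frac{W^\alpha}{\alpha!}\,(J_{r_2,x_0}\cP)(W,\sqrt pZ')\,dv_{TX}(W)$, which is precisely the composition of kernels $(F\cP)\circ(G\cP)$ with $F=J_{r_1,x_0}$, $G=\frac{\partial^\alpha f_{x_0}}{\partial Z^\alpha}(0)\frac{Z^\alpha}{\alpha!}J_{r_2,x_0}$. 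By \eqref{k1} this equals $\cK[J_{r_1,x_0},\frac{\partial^\alpha f_{x_0}}{\partial Z^\alpha}(0)\frac{Z^\alpha}{\alpha!}J_{r_2,x_0}]\cP$ evaluated at $(\sqrt pZ,\sqrt pZ')$, giving formula \eqref{30}. The parity claim in (a) follows because $J_{r_i}$ has parity $r_i$, the monomial $Z^\alpha$ has parity $|\alpha|$, and the operation $\cK[\cdot,\cdot]$ from \cite[Lemma 2.2]{MM08} preserves total parity, so $Q_{r,x_0}(f)$ has parity $r_1+r_2+|\alpha|=r$; the degree bound $\deg Q_{r,x_0}(f)\le 3r$ follows from $\deg J_{r_i}\le 3r_i$, $\deg Z^\alpha=|\alpha|$, the degree behavior of $\cK$, and $3r_1+3r_2+|\alpha|\le 3r$. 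Smoothness in $x_0$ is inherited from that of $J_{r,x_0}$ and $f_{x_0}$. Finally, \eqref{3.17} is immediate: the only term with $r=0$ has $r_1=r_2=0$, $\alpha=0$, so $Q_{0,x_0}(f)=\cK[\Id_{E_{x_0}},f(x_0)\Id_{E_{x_0}}]=f(x_0)\Id_{E_{x_0}}$ since $\cP\circ\cP=\cP$ (i.e.\ $\cK[1,1]=1$), and $J_{0,x_0}=\Id$ from \eqref{0.9}.

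The main obstacle is the uniformity of all the estimates over the parameter $x_0\in X$ in $\cC^m$-norm, together with the simultaneous control of the Taylor remainder of $f_{x_0}$ and the kernel remainder of $P_{\mH_p,x_0}$ on the domain $|W|<bp^\theta$ which grows with $p$. One must verify that differentiating in $x_0$ (which acts on $J_{r,x_0}$, on $f_{x_0}$, on $\kappa_{x_0}$, and on the coefficients $a_j$ in $\cP_{x_0}$) does not destroy the Gaussian decay needed to bound the $W$-integrals; this is handled as in \cite{MM08} by noting that $x_0$-derivatives of $\cP_{x_0}$ produce at most polynomial factors in $W$, still dominated by the exponential. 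Carefully bookkeeping which $k$ (in terms of $k_0$ and $m$) is needed so that \eqref{0.11}, \eqref{0.13} apply with enough spare error is the remaining technical care, but it is routine once the structure above is in place.
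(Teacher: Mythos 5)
Your plan follows essentially the same route as the paper's proof: localize the integral in \eqref{0.22} to a shrinking ball using Proposition \ref{t1}, insert the off-diagonal expansion \eqref{0.13} of both Bergman kernels and the Taylor expansion of $f_{x_0}$, extend the $Z''$-integral to all of $T_{x_0}X$ using the Gaussian decay of $\cP$, and identify the coefficients via the composition rule \eqref{k1}, with the same final bookkeeping of $k$ versus $k_0,m$ to absorb the remainders and the cross terms with $r_1+|\alpha|+r_2>k_0$. The argument is correct and no essential step is missing.
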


\begin{proof}
For $k_{0}, m\in \N$ fixed, let $k\geqslant k_{0}$ to be determined.
Set
\begin{align}\label{3.18}
\theta_{2}=\frac{1}{4(n+k+m+2)},\ \ \theta_{1}=1-2\theta_{2}.
\end{align}
By (\ref{0.13}), we have for any
$|Z|, |Z'|< 2b p^{-\frac{1}{2}+\theta_{2}}=2b p^{-\theta_{1}/2}$,
\begin{align}\begin{split}\label{0.18a}
& \Big|p^{-n}P_{\mH_{p}, x_{0}}(Z, Z')-
\sum^{k}_{r=0}\cF_{r, x_{0}}(\sqrt{p}Z, \sqrt{p}Z')
\kappa^{-1/2}_{x_{0}}(Z)\kappa^{-1/2}_{x_{0}}(Z')p^{-r/2}
\Big|_{\cC^{m}(X)}\\
&\hspace{3cm}\leqslant 
C_{k, l}p^{-k/2}.
\end{split}\end{align}


\noindent
For $|Z|, |Z'|< \frac{b}{2} p^{-\frac{1}{2}+\theta_{2}}
=\frac{b}{2}p^{-\theta_{1}/2}$, we get
from (\ref{0.22}) that
\begin{align}\label{3.17a}
T_{f, p, x_{0}}(Z, Z')=\int_{X}P_{\mH_{p}, x_{0}}(Z, y)f(y)
P_{\mH_{p}, x_{0}}(y, Z')dv_{X}(y).
\end{align}
We split the integral into integrals over $B^{X}(x, b p^{-\theta_{1}/2})$ 
and $X\setminus B^{X}(x, b p^{-\theta_{1}/2})$. 
We have 
\begin{equation}\label{3.22}
d(y, \exp_{x_{0}}Z)\geqslant d(y, x_{0})-|Z|
> \frac{b}{2} p^{-\theta_{1}/2}\,\:\:\text{on
$X\setminus B^{X}(x, b p^{-\theta_{1}/2})$},
\end{equation}
since on this set $d(y, x_{0})> b p^{-\theta_{1}/2}$ holds.
By Proposition \ref{t1} for $\theta_{1}$ in (\ref{3.18}),
(\ref{0.24}) and (\ref{3.22}) we have
for $|Z|, |Z'|< \frac{b}{2} p^{-\theta_{1}/2}$,
\begin{equation}\label{3.23}
T_{f, p, x_{0}}(Z, Z')=\!\!
\int\limits_{|Z''|<b p^{-\frac{\theta_{1}}2}}\!\!
P_{\mH_{p}, x_{0}}(Z, Z'')f_{x_{0}}(Z'')P_{\mH_{p}, x_{0}}(Z'', Z')
\kappa_{x_{0}}(Z'')dv_{TX}(Z'')+\cO_{m}(p^{-\infty})
\end{equation}
Then
\begin{align}\begin{split}\label{0.38}
& p^{-n}T_{f, p, x_{0}}(Z, Z')\kappa_{x_{0}}^{1/2}(Z)
\kappa_{x_{0}}^{1/2}(Z')
 \\  &= 
p^{-n}\!\!\int\limits_{|Z''|<bp^{-\frac{\theta_{1}}2}}
\!\!\!P_{\mH_{p}, x_{0}}(Z, Z'')\kappa_{x_{0}}^{\frac12}(Z)
\kappa_{x_{0}}^{\frac12}(Z'')
 f_{x_{0}}(Z'')
 P_{\mH_{p}, x_{0}}(Z'', Z')\kappa_{x_{0}}^{\frac12}(Z'')
 \kappa_{x_{0}}^{\frac12}(Z')dv_{TX}(Z'')\\
&\hspace{3cm}+\cO_{m}(p^{-\infty}).
 \end{split}\end{align}
 We consider the Taylor expansion of $f_{x_{0}}$:
 \begin{align}\label{0.39}
 f_{x_{0}}(Z)=&
 \sum_{|\alpha|\leqslant k} \frac{\partial^{\alpha}f_{x_{0}}}
 {\partial Z^{\alpha}}(0)
 \frac{Z^{\alpha}}{\alpha!}+ O(|Z|^{k+1})
  \\ = &
 \sum_{|\alpha|\leqslant k} p^{-|\alpha|/2}
 \frac{\partial^{\alpha}f_{x_{0}}}{\partial Z^{\alpha}}(0)
 \frac{(\sqrt{p}Z)^{\alpha}}{\alpha!}+ p^{-\frac{k+1}{2}}
 O(|\sqrt{p}Z|^{k+1}).
 \nonumber \end{align}
 Combining the asymptotic expansion (\ref{0.18a}) and (\ref{0.39}),
 to obtain the asymptotic expansion of (\ref{0.38}),
 we need to consider $I_{r_{1}, |\alpha|, r_{2}}(T_{x_{0}}X)(Z, Z')$
 defined by
 \begin{equation}\label{3.21}
 \begin{split}
 &p^{-n+\frac{r_{1}+|\alpha|+r_{2}}{2}} 
 I_{r_{1}, |\alpha|, r_{2}}(T_{x_{0}}X)(Z, Z'):= \\
 &\int_{T_{x_{0}}X}(J_{r_{1}, x_{0}}\cP_{x_{0}})(\sqrt{p}Z, \sqrt{p}Z'')
\frac{\partial^{\alpha}f_{x_{0}}}{\partial Z^{\alpha}}(0)
 \frac{(\sqrt{p}Z'')^{\alpha}}{\alpha!}
 (J_{r_{2}, x_{0}}\cP_{x_{0}})(\sqrt{p}Z'', \sqrt{p}Z')dv_{TX}(Z'').
 \end{split}
 \end{equation}
 Clearly, we can define
 $I_{r_{1}, |\alpha|, r_{2}}\big(B^{T_{x_{0}}X}(0, a)\big)(Z, Z')$ and
 $I_{r_{1}, |\alpha|, r_{2}}\big(T_{x_{0}}X
 \backslash B^{T_{x_{0}}X}(0, a)\big)
 (Z, Z')$ for $a>0$ in the same manner.
 Then by (\ref{0.38}),
 \begin{align}\label{3.23a}
 & p^{-n}T_{f, p, x_{0}}(Z, Z')\kappa^{1/2}(Z)\kappa^{-1/2}(Z')
\nonumber \\
= &\sum_{r_{1}, |\alpha|, r_{2}\leqslant k}
 I_{r_{1}, |\alpha|, r_{2}}
 \big(B^{T_{x_{0}}X}(0, bp^{-\theta_{1}/2})\big)(Z, Z')
  \\ &
 +I_{1}(Z, Z')+I_{2}(Z, Z')+I_{3}(Z, Z')
 +\cO_{m}(p^{-\infty}),
 \nonumber \end{align}
 with
 \begin{align}\label{3.23b}
 & I_{1}(Z, Z')
 \\ =&
 \int_{|Z''|<bp^{-\theta_{1}/2}}
 \Big[p^{-n}P_{\mH_{p}}(Z, Z'')\kappa^{1/2}(Z)\kappa^{1/2}(Z'')
 -\sum_{r\leqslant k}(J_{r, x_{0}}
 \cP_{x_{0}})(\sqrt{p}Z, \sqrt{p}Z'')p^{-r/2}\Big]
 \nonumber \\
 & \times f_{x_{0}}(Z'')P_{\mH_{p}}(Z'', Z')
 \kappa^{1/2}(Z'')\kappa^{1/2}(Z')dv_{TX}(Z''),
 \nonumber
 \end{align}
 and
 \begin{align}
 & I_{2}(Z, Z')
 \\ =&
 \int_{|Z''|<bp^{-\theta_{1}/2}}\sum_{r_{1}\leqslant k}(J_{r_{1}, x_{0}}
 \cP_{x_{0}})(\sqrt{p}Z, \sqrt{p}Z'')p^{-r_{1}/2}
 \nonumber \\ & \times
 \Big[f_{x_{0}}(Z'')-\sum_{|\alpha|\leqslant k}
 \frac{\partial^{\alpha}f_{x_{0}}}
 {\partial Z^{\alpha}}(0)\frac{(\sqrt{p}Z'')^{\alpha}}{\alpha!}
 p^{-|\alpha|/2}\Big]
 P_{\mH_{p}}(Z'', Z')\kappa^{1/2}(Z'')\kappa^{1/2}(Z')dv_{TX}(Z''),
\nonumber \\
& I_{3}(Z, Z')
\nonumber
\\ &=
 p^{n}\int_{|Z''|<bp^{-\theta_{1}/2}}\sum_{r_{1}
 \leqslant k}(J_{r_{1}, x_{0}}\cP_{x_{0}})(\sqrt{p}Z, \sqrt{p}Z'')
 p^{-r_{1}/2}
 \sum_{|\alpha|\leqslant k}\frac{\partial^{\alpha}f_{x_{0}}}
 {\partial Z^{\alpha}}(0)\frac{(\sqrt{p}Z'')^{\alpha}}{\alpha!}
 p^{-|\alpha|/2}
 \nonumber \\ & \times
 \Big[p^{-n}P_{\mH_{p}}(Z'', Z')\kappa^{1/2}(Z'')\kappa^{1/2}(Z')
 -\sum_{r_{2}\leqslant k}(J_{r_{2}}\cP_{x_{0}})(\sqrt{p}Z'', 
 \sqrt{p}Z')p^{-r_{2}/2}\Big]dv_{TX}(Z'').
\nonumber
\end{align}
We claim that for $k$ large,
\begin{align}\label{3.24}
\big|I_{j}(Z, Z')\big|_{\cC^{m}(X)}\leqslant C p^{-k_{0}/2}\ \
\textup{for}\ j=1, 2, 3.
\end{align}
In fact, by (\ref{0.24}), there exists $C_{0}>0$ and $M_{0}>0$
such that for all $(x, x')\in X\times X$,
\begin{align}\label{3.24a}
\big|P_{\mH_{p}}(x, x')\big|_{\cC^{0}(X\times X)}
\leqslant C_{0}p^{M_{0}}.
\end{align}
Combining  (\ref{0.18a}), (\ref{3.23b}) and (\ref{3.24a}) yields
\begin{align}\label{3.24b}
|I_{1}(Z, Z')|_{\cC^{m}(X)}\leqslant Cp^{-\frac{k}{2}+M_{0}}.
\end{align}
 By (\ref{0.39}), (\ref{3.24a}) and the fact that $\deg J_{r}\leqslant 3r$,
 \begin{align}\begin{split}\label{3.25}
 \big|I_{2}(Z, Z')\big|_{\cC^{m}(X)}
 &\leqslant   C (1+\sqrt{p}|Z|)^{3k}\cdot p^{-\frac{k+1}{2}}
 \cdot p^{M_{0}}
 \\ &\leqslant 
 C p^{-\frac{k+1}{2}+3k\theta_{2}+M_{0}}.
 \end{split}\end{align}
 By (\ref{0.18a}) and the fact that $\deg J_{r}\leqslant 3r$,
 we have for $|Z|, |Z'|<\frac{b}{2} p^{-\theta_{1}/2}$,
 \begin{align}\label{3.25a}
 \big|I_{3}(Z, Z')\big|_{\cC^{m}(X)}
 \leqslant C (1+\sqrt{p}|Z|)^{3k} p^{-\frac{k}{2}}.
 \end{align}
 From (\ref{3.24b})--(\ref{3.25a}), choose $k>k_{0}$ big enough
 such that
 \begin{align}\label{0.25c}
 k+1-6k\theta_{2}-2M_{0}=k\Big(1-\frac{3}{4(n+k+m+2)}\Big)
 -2M_{0}+1>k_{0}.
 \end{align}
 Then the claim (\ref{3.24}) holds.
 By (\ref{3.23a}) and (\ref{3.24}),
 \begin{align}\begin{split}\label{3.26b}
 & \Big|p^{-n}T_{f, p, x_{0}}(Z, Z')\kappa^{1/2}(Z)\kappa^{-1/2}(Z')
 \\ & \ \ \ \ \
 -\sum_{r_{1}, \alpha, r_{2}\leqslant k}
 I_{r_{1}, |\alpha|, r_{2}}(B^{T_{x_{0}}X}(0, bp^{-\theta_{1}/2}))(Z, Z')
 \Big|_{\cC^{m}(X)}
 \leqslant C p^{-k_{0}/2}.
 \end{split}\end{align}
 Note by (\ref{p}),
\begin{align}\label{3.26a}
 \Big|\cP(Z, Z')\Big|=e^{-\frac{1}{4}\sum_{j}a_{j}|z_{j}-z'_{j}|^{2}}
 \leqslant e^{-\frac{1}{4} \mu_{0}|Z-Z'|^{2}}.
 \end{align}
 Then
 \begin{align}\label{42a}
 \Big|\cP(\sqrt{p}Z, \sqrt{p}Z')\Big|=
 e^{-\frac{p}{4}\sum_{j}a_{j}|z_{j}-z'_{j}|^{2}}
 \leqslant e^{-\frac{p}{4} \mu_{0}|Z-Z'|^{2}}.
 \end{align}
By (\ref{42a}) and the fact that $\deg J_{r}\leqslant 3r$, we obtain
\begin{align}\begin{split}\label{3.34}
& \Big|I_{r_{1}, |\alpha|, r_{2}}\big(T_{x_{0}}
X\backslash B^{T_{x_{0}}X}(0, bp^{-\theta_{1}/2})\big)(Z, Z')\Big|_{\cC^{m}(X)}
 \\ &\leqslant 
C p^{n}\int_{|Z''|>b p^{-\theta_{1}/2}}
(1+\sqrt{p}|Z|+\sqrt{p}|Z''|)^{3r_{1}}
(1+\sqrt{p}|Z|+\sqrt{p}|Z''|)^{3r_{2}}
 \\ & \ \ \ \ \ \
\times (\sqrt{p}|Z''|)^{|\alpha|}\
\exp\big[-\frac{\mu_{0}}{2}\sqrt{p}|Z-Z''|
-\frac{\mu_{0}}{2}\sqrt{p}|Z''-Z'|\big]dv_{TX}(Z'').
\end{split} \end{align}
\noindent
Note that for any $|Z|, |Z'|< \frac{b}{2} p^{-\theta_{1}/2}$ and
$|Z''|> b p^{-\theta_{1}/2}$, we have
\begin{align}\label{44}
|Z|< |Z''|,\ \ |Z'|<|Z''|,\ \
|Z-Z''|\geqslant \frac{1}{2}|Z''|, \ \
|Z'-Z''|\geqslant \frac{1}{2}|Z''|.
\end{align}
Substituting (\ref{44}) into (\ref{3.34}) yields
for any $|Z|, |Z'|< \frac{b}{2} p^{-\theta_{1}/2}$,
\begin{align}\label{3.35}
& \Big|I_{r_{1}, |\alpha|, r_{2}}\big(T_{x_{0}}
X\backslash B^{T_{x_{0}}X}
(0, b p^{-\theta_{1}/2})\big)(Z, Z')\Big|_{\cC^{m}(X)}
 \\
&\leqslant  
C p^{n} \int_{|Z''|>b p^{-\theta_{1}/2}}
\big(1+\sqrt{p}|Z''|\big)^{3(r_{1}+r_{2})}
(\sqrt{p}|Z''|)^{|\alpha|}
e^{-\frac{\mu_{0}}{2}\sqrt{p}|Z''|} dv_{TX}(Z'')
\nonumber \\ 
&\leqslant 
C p^{n}\exp\left(-\frac{b}{4}\mu_{0} p^{\theta_{2}}\right)
 \int_{|Z''|>b p^{-\theta_{1}/2}}
\big(1+\sqrt{p}|Z''|\big)^{3(r_{1}+r_{2})+|\alpha|}
e^{-\frac{\mu_{0}}{4}\sqrt{p}|Z''|} dv_{TX}(Z'')
\nonumber \\ 
&\leqslant 
C \exp\left(-\frac{b}{4}\mu_{0} p^{\theta_{2}}\right)
\int_{|Z''|>b p^{\theta_{2}}}
\big(1+|Z''|\big)^{3(r_{1}+r_{2})+|\alpha|}
e^{-\frac{\mu_{0}}{4}|Z''|} dv_{TX}(Z'')
\nonumber \\ 
&\leqslant 
C \exp\left(-\frac{b}{4}\mu_{0} p^{\theta_{2}}\right).
\nonumber
\end{align}
Combining (\ref{3.26b}) and (\ref{3.35}), we obtain
\begin{align}\label{3.30}
\Big|p^{-n}T_{f, p, x_{0}}(Z, Z')\kappa^{1/2}(Z)\kappa^{1/2}(Z')
-\sum_{r_{1}, |\alpha|, r_{2}\leqslant k}
I_{r_{1}, |\alpha|, r_{2}}(T_{x_{0}}X)(Z, Z')\Big|_{\cC^{m}(X)}
\leqslant C p^{-\frac{k_{0}}{2}}.
\end{align}
Clearly,
\begin{align}\begin{split}\label{3.31a}
& \sum_{r_{1}, |\alpha|, r_{2}\leqslant k}
I_{r_{1}, |\alpha|, r_{2}}(T_{x_{0}}X)(Z, Z')
\\&
=
\Big(\sum_{r_{1}+|\alpha|+r_{2}\leqslant k_{0}}
+ \sum^{3k}_{r_{1}+|\alpha|+r_{2}=k_{0}+1} \Big)
I_{r_{1}, |\alpha|, r_{2}}(T_{x_{0}}X)(Z, Z').
\end{split}\end{align}
By (\ref{k1}) and (\ref{3.21}),
 \begin{align}\begin{split}\label{3.27}
 & I_{r_{1}, |\alpha|, r_{2}}(T_{x_{0}}X)(Z, Z')
 \\ &= 
 p^{-(r_{1}+|\alpha|+r_{2})/2}
 \Big(\cK\big[J_{r_{1}, x_{0}}, 
 \frac{\partial^{\alpha}f_{x_{0}}}{\partial Z^{\alpha}}(0)
 \frac{Z^{\alpha}}{\alpha!} J_{r_{2}, x_{0}}\big]\cP\Big)
 (\sqrt{p}Z, \sqrt{p}Z').
 \end{split}\end{align}
In view of \eqref{3.30}-\eqref{3.27}, to finish the proof of Lemma \ref{t3.4},
 it suffices to prove that the $\cC^{m}$ norm with respect to the
 parameter $x_{0}\in X$ of the term
 \begin{align}
 \sum^{3k}_{r_{1}+|\alpha|+r_{2}=k_{0}+1}
 I_{r_{1}, |\alpha|, r_{2}}(T_{x_{0}}X)(Z, Z'),
 \ \textup{for}\ |Z|, |Z'|< \frac{b}{2} p^{-\theta_{1}/2},
 \end{align}
 is controlled by $ C p^{-k_{0}/2}$ for large $k$.

Estimating $I_{r_{1}, |\alpha|, r_{2}}(T_{x_{0}}X)(Z, Z')$
 for $|Z|, |Z'|<\frac{b}{2} p^{-\theta_{1}/2}$,
 using \eqref{42a}, \eqref{3.27} and
 the fact that $\deg J_{r}\leqslant 3r$, we obtain
\begin{align}\begin{split}\label{43}
& \big|I_{r_{1}, |\alpha|, r_{2}}(T_{x_{0}}X)(Z, Z')\big|_{\cC^{m}(X)}
\\ &\leqslant 
C p^{-s/2}
\big(1+\sqrt{p}|Z|+\sqrt{p}|Z'|\big)^{3s}
\exp\big[-\frac{p}{4} \mu_{0}|Z-Z'|^{2} \big]
 \\  &\leqslant  
C p^{-\frac{s}{2}} p^{\frac{1-\theta_{1}}{2}\cdot 3s}
=Cp^{-s(1-6\theta_{2})/2},
\end{split}\end{align}
with $s=r_{1}+|\alpha|+r_{2}$. If $s>k_{0}$, then
\begin{align}
s(1-6\theta_{2})\geqslant (k_{0}+1)\left(1-\frac{6}{4(n+k+m+2)}\right).
\end{align}
Choose $k$ big enough such that
\begin{align}\label{3.32a}
(k_{0}+1)\left(1-\frac{6}{4(n+k+m+2)}\right)>k_{0}.
\end{align}
Then
\begin{align}\label{3.31}
\Big|I_{r_{1}, |\alpha|, r_{2}}(T_{x_{0}}X)(Z, Z')\Big|_{\cC^{m}(X)}
\leqslant C p^{-k_{0}/2}\ \
\textup{for}\ r_{1}+|\alpha|+r_{2}=s>k_{0}.
\end{align}

To sum up, we have proved the following statament: for fixed $k_{0}$,
choose $k>k_{0}$ such that (\ref{0.25c}) and (\ref{3.32a}) hold. Set
\begin{align}
\theta_{2}=\frac{1}{4(n+k+m+2)}, \ \ \ \theta_{1}=1-2\theta_{2}.
\end{align}
Then for any $|Z|, |Z'|<\frac{b}{2}p^{-\theta_{1}/2}$, we have
\begin{align}\label{3.46}
& \Big|p^{-n}T_{f, p, x_{0}}(Z, Z')\kappa^{1/2}_{x_{0}}(Z)
\kappa^{1/2}_{x_{0}}(Z')
-\sum^{k_{0}}_{r=0}\big(Q_{r, x_{0}}(f)\cP_{x_{0}}\big)
(\sqrt{p}Z, \sqrt{p}Z')p^{-r/2}\Big|_{\cC^{m}(X)}
\\
&\leqslant 
 C p^{-k_{0}/2}, \nonumber
\end{align}
where $Q_{r, x_{0}}(f)$ is given by (\ref{30}).
This completes the proof of Lemma \ref{t3.4}.
\end{proof}

\begin{rem}\label{t5}
Let $\Xi_{p}$ be a sequence of operators
satisfying Condition A and assume that $\Xi_{p}=P_{\mH_{p}}\Xi_{p}P_{\mH_{p}}$
for all $p\in\N$.
Applying the proof of Lemma \ref{t3.4}, by splitting
integrals and studying different integration regions,
we deduce by Theorem \ref{t22} and \eqref{0.25}:
%

For any $k, m, m'\in \N$, there exist $\theta_{k, m, m'}\in (0, 1/2)$
such that for any $b>0$, there exist $C>0$ such that
for every $x_{0}\in X$, $Z, Z'\in T_{x_{0}}X$ with
$|Z|, |Z'|< b p^{-\frac{1}{2}+\theta_{k, m, m'}}$ and
$p\in \N^{\ast}$, $|\alpha|+|\alpha'|\leqslant m'$, we have
\begin{align}\begin{split}\label{3.48}
& \bigg|\frac{\partial^{|\alpha|+|\alpha'|}}
{\partial Z^{\alpha}\partial Z'^{\alpha'}}
\Big(p^{-n}\Xi_{p, x_{0}}(Z, Z')\kappa^{1/2}_{x_{0}}(Z)
\kappa_{x_{0}}^{1/2}(Z')
 -\sum^{k}_{r=0}(Q_{r, x_{0}}\cP_{x_{0}})(\sqrt{p}Z, \sqrt{p}Z')p^{-r/2}
\Big)\bigg|_{\cC^{m}(X)}
\\ & \leqslant
Cp^{-(k-m')/2}.
\end{split}\end{align}
\end{rem}

\noindent
In fact, by $\Xi_{p}=P_{\mH_{p}}\Xi_{p}P_{\mH_{p}}$, 
for $|Z|, |Z'|< b p^{-\frac{1}{2}+\theta_{k, m, m'}}$, we have 
the analogue of (\ref{3.17a}):
\begin{align}\label{3.49}
\Xi_{p, x_{0}}(Z, Z')=
\int_{X}P_{\mH_{p, x_{0}}}(Z, y)\Xi_{p}(y, Z')
P_{\mH_{p}}(y, Z')dv_{X}(y).
\end{align}
Then the estimate (\ref{3.48}) follows 
from Theorem \ref{t22}, (\ref{0.25}), (\ref{0.18a}) and (\ref{3.49}) 
in the same manner as (\ref{3.46}) follows
from (\ref{0.18a}), (\ref{3.17a}) and (\ref{0.39}).

\section{A criterion for Toeplitz operators}\label{s3}

In this section we prove a useful criterion which ensures that a given family
of bounded linear operators is a Toeplitz operator.

\begin{thm}\label{t4.1}
Let $\{T_{p}: L^{2}(X, L^{p}\otimes E)
\rightarrow L^{2}(X, L^{p}\otimes E)\}$ be a family
of bounded linear operators which satisfies the following three conditions:
\\[2pt]
\textup{(i)}\ For any $p\in \N$, $P_{\mH_{p}}T_{p}P_{\mH_{p}}=T_{p}$.
\\[2pt]
\textup{(ii)}\ For any $b>0$, $l\in \N$ and $0<\theta<1$,
there exists $C_{b, l, \theta}>0$ such that for all $p\geqslant 1$ and all
$(x, x')\in X\times X$ with $d(x, x')>b p^{-\theta/2}$,
\begin{align}\label{4.1}
\Big|T_{p}(x, x')\Big|\leqslant C_{b, l, \theta} p^{-l}.
\end{align}
\\[2pt]
\textup{(iii)} There exists a family of polynomials
$\big\{\cQ_{r, x_{0}}\in \End(E_{x_{0}})[Z, Z']\big\}_{x_{0}\in X}$
such that
\\[1pt]
\textup{(a)} each $\cQ_{r, x_{0}}$ has the same parity as $r$
and there exist $b_{1}, b_{0}\in \N$ such
that $\deg \cQ_{r}\leqslant b_{1}r+b_{0}$,
\\[1pt]
\textup{(b)} the family is smooth in $x_{0}\in X$ and
\\[1pt]
\textup{(c)} for any $k_{0}, m\in \N$, there exists 
$\theta_{k_{0}, m}\in (0, 1/2)$
such that for any $b>0$, $p\in \N^{\ast}$, $x_{0}\in X$ and
every $Z, Z'\in T_{x_{0}}X$ with
$|Z|, |Z'|< b p^{-\frac{1}{2}+\theta_{k_{0}, m}}$, we have
\begin{align}\label{4.3}
p^{-n}T_{p, x_{0}}(Z, Z')\cong
\sum^{k_{0}}_{r=0}\big(\cQ_{r, x_{0}}\cP_{x_{0}}\big)(\sqrt{p}Z, \sqrt{p}Z')p^{-r/2}
+\cO_{m}(p^{-k_{0}/2}),
\end{align}
in the sense of Notation A for $k_{0}, m, \theta_{k_{0}, m}$.
\\
Then $\{T_{p}\}$ is a Toeplitz operator.
\end{thm}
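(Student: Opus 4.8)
The strategy is the standard one for this kind of criterion (it follows the pattern of \cite[Theorem 4.9]{MM08}): show that $T_p$ coincides, up to $\mO(p^{-\infty})$ in operator norm, with a Toeplitz operator built from a sequence $g_l\in\cC^\infty(X,\End(E))$, and recover the $g_l$ one at a time from the leading polynomials $\cQ_{r,x_0}$. First I would set up the inductive scheme: I claim there exist $g_l\in\cC^\infty(X,\End(E))$, $l\geqslant 0$, such that for every $k\in\N$
\begin{align}\label{e:plan1}
T_p-\sum_{l=0}^{k}p^{-l}T_{g_l,p}=\mO(p^{-k-1})
\end{align}
in operator norm. Granting the $g_l$, condition (i) gives \eqref{0.17}, and \eqref{e:plan1} is exactly \eqref{0.18}, so $\{T_p\}$ is Toeplitz. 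The construction is recursive: set $R^{(k)}_p:=T_p-\sum_{l<k}p^{-l}T_{g_l,p}$ (with $R^{(0)}_p=T_p$), assume inductively $R^{(k)}_p=\mO(p^{-k})$, and show that the $p^{-k}$-coefficient of the expansion of $R^{(k)}_p$ near the diagonal is itself of the form $Q_{0,x_0}(g_k)=g_k(x_0)\Id_{E_{x_0}}$ for a well-defined smooth $g_k$; then $R^{(k+1)}_p=R^{(k)}_p-p^{-k}T_{g_k,p}=\mO(p^{-k-1})$ by the norm estimate below, closing the induction.

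\textbf{Extracting $g_k$.} By condition (iii) and Lemma \ref{t3.4} (applied to each already-constructed $T_{g_l,p}$), the rescaled kernel of $R^{(k)}_p$ has a near-diagonal expansion in the sense of Notation A with leading polynomials $\widetilde\cQ_{r,x_0}:=\cQ_{r,x_0}-\sum_{l<k,\,2l\leqslant r}Q_{r-2l,x_0}(g_l)$; the inductive hypothesis $R^{(k)}_p=\mO(p^{-k})$ forces, via the off-diagonal decay (ii) and the argument of Lemma \ref{t3.2}, that $\widetilde\cQ_{r,x_0}\cP_{x_0}=0$ for $r<2k$, i.e.\ $\widetilde\cQ_{r,x_0}=0$ for $r<2k$, so the first surviving term is $p^{-k}(\widetilde\cQ_{2k,x_0}\cP_{x_0})(\sqrt pZ,\sqrt pZ')$. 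Next I use condition (i): since $R^{(k)}_p=P_{\mH_p}R^{(k)}_pP_{\mH_p}$, I apply Remark \ref{t5} (in the form $R^{(k)}_p=P_{\mH_p}R^{(k)}_pP_{\mH_p}$) together with the reproducing property of the generalized Bergman kernel to show that $\widetilde\cQ_{2k,x_0}\cP_{x_0}$ is invariant under the convolution $\cF_{0,x_0}*(\cdot)*\cF_{0,x_0}=\cP_{x_0}*(\cdot)*\cP_{x_0}$; by the calculus \eqref{k1} and \cite[Lemma 2.2]{MM08} this pins down $\widetilde\cQ_{2k,x_0}$ to be a constant in $\End(E_{x_0})$, namely $\widetilde\cQ_{2k,x_0}=g_k(x_0)\Id_{E_{x_0}}$ for a uniquely determined $g_k(x_0)$. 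Smoothness of $x_0\mapsto g_k(x_0)$ follows from the smoothness clause in (iii)(b) and the smooth dependence in Lemma \ref{t3.4}.

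\textbf{The norm estimate and the main obstacle.} The remaining ingredient is the uniform bound: if a family $S_p=P_{\mH_p}S_pP_{\mH_p}$ has smooth kernel satisfying (ii) with $l$ arbitrary and whose rescaled near-diagonal kernel is $\mO_m(p^{-j})$ in the sense of \eqref{0.25} (no surviving polynomial terms up to order $p^{-j}$, say after subtracting the already-identified part), then $\|S_p\|=\mO(p^{-j})$ — i.e.\ pointwise kernel control on the $p^{-1/2}$-scale plus rapid off-diagonal decay upgrades to an operator-norm bound. This is proved as in \cite[\S4]{MM08}: write $S_p=P_{\mH_p}S_pP_{\mH_p}$, estimate the Schwartz kernel of $S_p$ by splitting $X\times X$ into the near-diagonal region $d(x,x')<bp^{-\theta/2}$ (where the $\cC^0$-bound on the rescaled kernel gives $|S_p(x,x')|\leqslant Cp^{n-j}e^{-c\sqrt p\,d(x,x')}$) and its complement (where (ii) gives $\mO(p^{-\infty})$), and then bound the operator norm by the Schur test, $\|S_p\|\leqslant \sup_x\int_X|S_p(x,x')|\,dv_X(x')\leqslant Cp^{n-j}\int_{\R^{2n}}e^{-c|u|}p^{-n}du=\mO(p^{-j})$. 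The step I expect to be the genuine obstacle is the one hidden in the middle paragraph: showing that condition (i) really forces $\widetilde\cQ_{2k,x_0}$ to be a \emph{constant} endomorphism — this is where the model-operator calculus on $T_{x_0}X$ (the algebra generated by $\cP_{x_0}$ under $*$, cf.\ \eqref{k1}) must be invoked precisely, and where any presence of nonzero eigenvalues of $\Delta_{p,\Phi}$ near the origin (the phenomenon flagged in the introduction) could in principle complicate the identification; one must check that $\cF_{0,x_0}=\cP_{x_0}\Id_{E_{x_0}}$ is still the relevant idempotent and that $\cP_{x_0}*(Q\cP_{x_0})*\cP_{x_0}=Q(0)\,\cP_{x_0}$ for polynomials $Q$, which is the content of \cite[Lemma 2.2]{MM08} and \eqref{k1}.
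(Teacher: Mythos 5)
Your overall scheme (inductive extraction of the $g_l$, identification of $g_k(x_0)$ from the leading surviving polynomial, and a Schur-test upgrade from kernel bounds to operator-norm bounds) is the same as the paper's, and the last ingredient is indeed proved essentially as you describe (cf.\ Lemma \ref{t4.5}). But the step you yourself flag as ``the genuine obstacle'' is resolved by an identity that is false, and this is precisely where the real work of the paper lies. You claim that $\cP_{x_0}\circ(Q\cP_{x_0})\circ\cP_{x_0}=Q(0)\,\cP_{x_0}$ for polynomials $Q$, attributing this to \cite[Lemma 2.2]{MM08} and \eqref{k1}. What is actually true (this is \cite[(2.8)]{MM08}, used in Lemma \ref{t4.4}) is that the relation $Q\cP_{x_0}=\cP_{x_0}\circ(Q\cP_{x_0})\circ\cP_{x_0}$ forces $Q$ to be a polynomial in $(z,\ov z')$ only; conversely, \emph{every} kernel $Q(z,\ov z')\cP_{x_0}$ with $Q$ holomorphic in $z$ and antiholomorphic in $z'$ is fixed by $\cP_{x_0}\circ(\cdot)\circ\cP_{x_0}$, since multiplication by $Q(\cdot,\ov z')$ preserves the model Bargmann space. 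So condition (i) alone does not pin $\cQ_{0,x_0}$ (or $\widetilde{\cQ}_{2k,x_0}$) down to a constant, and your induction does not close.

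The paper's proof of constancy (Proposition \ref{t4.3}, via Lemmas \ref{t4.4}--\ref{t4.8}) requires substantially more: reduction to self-adjoint $T_p$ via \eqref{4.4}; the introduction of the auxiliary operators $F^{(i)}P_{\mH_p}$ and $\cT_p=T_p-\sum_i(F^{(i)}P_{\mH_p})p^{i/2}$ in \eqref{4.22}--\eqref{4.23}; the operator-norm bounds $\|\cT_p\|,\|\cT_p^*\|\leqslant Cp^{-1/2}$ of Lemma \ref{t4.5} (which use the \emph{boundedness} of $T_p$, a hypothesis your argument never invokes); the comparison of the kernel expansions of $\cT_p$ and of $\cT_p^*=T_p-\sum_i p^{i/2}(P_{\mH_p}\tilde F^{(i)})$ to kill the positive powers of $p$ and hence the Taylor coefficients of $\tilde F^{(i)}$ (Lemma \ref{t4.6}), yielding $F^{(i)}_x(0,\ov z')=F^{(i)}_x(z,0)=0$ for $i>0$ (Lemma \ref{t4.7}); and finally an iteration of the whole argument applied to the derivative operators $\frac{1}{\sqrt p}P_{\mH_p}(\nabla^{E_p}_{X}T_p)P_{\mH_p}$ to kill the mixed monomials $z^\alpha\ov z'^\beta$ with $\alpha,\beta\neq 0$ (Lemma \ref{t4.8}). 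The same machinery is needed again at each inductive step to dispose of the odd-order coefficients (Lemma \ref{t4.10}), without which your remainder $R^{(k+1)}_p$ would only be $\mO(p^{-k-1/2})$. None of this is recoverable from the algebra of $\cP_{x_0}$ alone, so the proposal as written has a genuine gap at its central step.
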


\begin{rem}
By Lemmas \ref{t3.2} and \ref{t3.4}, and by (\ref{0.17}), (\ref{0.18})
and the Sobolev inequality (cf.\ \cite[(4.14)]{Dai04}), it follows that every
Toeplitz operator in
the sense of Definition \ref{d3.1} verifies the Conditions (i), (ii) and (iii)
of Theorem \ref{t4.1}.
\end{rem}

We start the proof of Theorem \ref{t4.1}.
Let $T^{\ast}_{p}$ be the adjoint of $T_{p}$. By writing
\begin{align}\label{4.4}
T_{p}=\frac{1}{2}(T_{p}+T^{\ast}_{p})
+\sqrt{-1}\frac{1}{2\sqrt{-1}}(T_{p}-T^{\ast}_{p}),
\end{align}
we may and will assume from now on that $T_{p}$ is self-adjoint.

We will define inductively the sequence $(g_{l})_{l\geqslant 0}$, 
$g_{l}\in \cC^{\infty}(X, \End(E))$ such
that
\begin{align}\label{4.5}
T_{p}=\sum^{q}_{l=0}P_{\mH_{p}}g_{l}p^{-l}P_{\mH_{p}}
+\mO(p^{-q-1}) \ \
\textup{for all}\ q\geqslant 0.
\end{align}
Moreover, we can make these $g_{l}$'s to be self-adjoint.

Let us start with the case $q=0$ of (\ref{4.5}). For an arbitrary 
but fixed $x_{0}\in X$, we set
\begin{align}\label{4.6}
g_{0}(x_{0})=\cQ_{0, x_{0}}(0, 0)\in \End(E_{x_{0}}).
\end{align}
We will show that
\begin{align}\label{4.7}
p^{-n}(T_{p}-T_{g_{0}, p})_{x_{0}}(Z, Z')\cong \cO_{m}(p^{-1}),
\end{align}
which implies the case $q=0$ of (\ref{4.5}), namely,
\begin{align}\label{4.8}
T_{p}=P_{\mH_{p}}g_{0}P_{\mH_{p}}+\mO(p^{-1}).
\end{align}
The proof of (\ref{4.7})--(\ref{4.8}) will be done in Proposition \ref{t4.3} 
and Proposition \ref{t4.9}.

\begin{prop}\label{t4.3}
In the conditions of Theorem \textup{\ref{t4.1}}, we have
\begin{align}
\cQ_{0, x_{0}}(Z, Z')=\cQ_{0, x_{0}}(0, 0)\in \End(E_{x_{0}})
\end{align}
for all $x_{0}\in X$ and all $Z, Z'\in T_{x_{0}}X$.
\end{prop}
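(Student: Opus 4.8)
The plan is to exploit the fact that $T_p$ is a self-adjoint Toeplitz-type operator with image in $\mathcal{H}_p$, hence $T_p = P_{\mathcal{H}_p} T_p P_{\mathcal{H}_p}$, and to combine this algebraic identity with the near-diagonal expansion \eqref{4.3} and the model-kernel calculus governing $\mathcal{P}$. First I would write out $T_p = P_{\mathcal{H}_p} T_p P_{\mathcal{H}_p}$ at the level of Schwartz kernels in the trivialization near a fixed $x_0$, as in \eqref{3.49} of Remark \ref{t5}, namely
\begin{align*}
T_{p,x_0}(Z,Z') = \int_X P_{\mathcal{H}_p,x_0}(Z,y)\, T_p(y,y'')\, P_{\mathcal{H}_p}(y'',Z')\, dv_X(y)\, dv_X(y''),
\end{align*}
and rescale. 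Using the off-diagonal decay of $P_{\mathcal{H}_p}$ (Proposition \ref{t1}) together with \eqref{4.1}, the integration is effectively localized to a ball of radius $O(p^{-1/2+\theta})$, so that at leading order in $p^{-1/2}$ the rescaled kernel $p^{-n}T_{p,x_0}(Z/\sqrt p, Z'/\sqrt p)$ is computed by the model operators: the leading term $\mathcal{Q}_{0,x_0}\mathcal{P}$ must equal the triple composition $(\mathrm{Id}_{E_{x_0}}\mathcal{P}) \circ (\mathcal{Q}_{0,x_0}\mathcal{P}) \circ (\mathrm{Id}_{E_{x_0}}\mathcal{P})$ in the sense of the calculus \eqref{k1}, since $J_{0,x_0} = \mathrm{Id}_{E_{x_0}}$.

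The heart of the argument is then the algebraic identity $\mathcal{P}\circ (\mathcal{Q}_{0,x_0}\mathcal{P}) \circ \mathcal{P} = \mathcal{Q}_{0,x_0}\mathcal{P}$ for the model kernels on $T_{x_0}X \cong \mathbb{C}^n$. Here one uses that $\mathcal{P}(Z,Z')$ is, up to the normalization constants $a_j$, the Bergman kernel of the Bargmann-Fock space, which acts as a reproducing-kernel projection: convolution with $\mathcal{P}$ is an orthogonal projection onto the space spanned by $\{z^\beta \mathcal{P}(\cdot, 0)\}$ (holomorphic times the Gaussian). The composition formula \eqref{k1} of \cite[Lemma 2.2]{MM08} makes $\mathcal{K}[\,\cdot\,,\,\cdot\,]$ explicit; the key sub-claim is that $\mathcal{K}[\mathrm{Id}, \mathcal{K}[\mathcal{Q}_{0,x_0}, \mathrm{Id}]] = \mathcal{K}[\mathrm{Id}\,\mathcal{Q}_{0,x_0}\,\mathrm{Id}]$ forces $\mathcal{Q}_{0,x_0}$ to lie in the image of this projection in both the $Z$ and $Z'$ variables, i.e., $\mathcal{Q}_{0,x_0}(Z,Z')$ is a polynomial that is holomorphic in $Z'$ and anti-holomorphic in $Z$. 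But $\mathcal{Q}_{0,x_0}$ has the same parity as $0$, so it is even; more importantly one invokes the self-adjointness of $T_p$, which translates (via \eqref{0.22}-type symmetry of $P_{\mathcal{H}_p}(x,x')^* = P_{\mathcal{H}_p}(x',x)$ and the evenness) into $\mathcal{Q}_{0,x_0}(Z,Z')^* = \mathcal{Q}_{0,x_0}(Z',Z)$. A polynomial that is simultaneously holomorphic in $Z'$, anti-holomorphic in $Z$, and invariant (up to adjoint) under swapping $Z \leftrightarrow Z'$ — after matching degrees via part (iii)(a) — must be constant in $(Z,Z')$; evaluating at $Z=Z'=0$ gives $\mathcal{Q}_{0,x_0}(Z,Z') = \mathcal{Q}_{0,x_0}(0,0)$.

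I expect the main obstacle to be making the reproducing-kernel characterization fully rigorous at the polynomial level: one must show that if $(F\mathcal{P})\circ \mathcal{P} = F\mathcal{P}$ and $\mathcal{P} \circ (F\mathcal{P}) = F\mathcal{P}$ as a genuine consequence of \eqref{4.3} (not merely an asymptotic one), then $F = F(Z,Z')$ has the stated holomorphy, and conversely that no spurious lower-order terms from the $r=1,2,\dots$ contributions can mimic $\mathcal{Q}_{0,x_0}\mathcal{P}$ — this is why the parity statement in (iii)(a), which separates even from odd $r$, is indispensable. A secondary technical point is the careful bookkeeping of the rescaling, localization, and error terms $\mathcal{O}_m(p^{-k_0/2})$, but that is entirely parallel to the proof of Lemma \ref{t3.4} and to \cite[Lemma 4.6]{MM08}, so I would cite it rather than redo it. The cleanest route is probably: (1) extract from \eqref{4.3} and $T_p = P_{\mathcal{H}_p}T_pP_{\mathcal{H}_p}$ the exact model identity for the leading term; (2) invoke the Bargmann-Fock reproducing property to pin down the holomorphy type of $\mathcal{Q}_{0,x_0}$; (3) use self-adjointness plus degree bounds to conclude it is constant.
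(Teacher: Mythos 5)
Your first step reproduces the paper's Lemma \ref{t4.4}: writing $T_p=P_{\mH_p}T_pP_{\mH_p}$ at the kernel level, localizing, and comparing leading model terms gives $\cQ_{0,x_0}\cP_{x_0}=\cP_{x_0}\circ(\cQ_{0,x_0}\cP_{x_0})\circ\cP_{x_0}$, whence $\cQ_{0,x_0}\in\End(E_{x_0})[z,\ov{z}']$ by the reproducing property of the Bargmann--Fock projector (note that the paper's convention, forced by \eqref{p}, is ``polynomial in $z$ and $\ov{z}'$'', the reverse of the holomorphy type you state, but that is a minor slip). This part is sound and is indeed how the paper begins.

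The concluding step, however, has a genuine gap. The assertion that a polynomial in $z,\ov{z}'$ which is even and satisfies the self-adjointness symmetry $\cQ_{0,x}(Z,Z')^{*}=\cQ_{0,x}(Z',Z)$ must be constant is false: $\cQ_{0,x}(Z,Z')=z_1\ov{z}_1'$ is reproduced by $\cP$ on both sides (by \cite[(2.8)]{MM08}), is homogeneous of even degree, and is Hermitian in the stated sense, yet is not constant. The degree bound in (iii)(a) only says $\deg\cQ_{0}\leqslant b_0$ for some unspecified $b_0$, so there is no ``matching of degrees'' available to force degree zero, and parity only excludes the odd part. This is exactly why the paper does not stop after Lemma \ref{t4.4} but spends Lemmas \ref{t4.5}--\ref{t4.8} killing the higher homogeneous components $F^{(i)}$ of $F=\cQ_{0}$: one builds global operators $F^{(i)}P_{\mH_p}$ from the kernel slices $F^{(i)}_x(0,\ov{z}')$ and a cutoff, proves by a Schur-type estimate that $\cT_p=T_p-\sum_i p^{i/2}F^{(i)}P_{\mH_p}$ and its adjoint have operator norm $O(p^{-1/2})$, and then notes that the expansion of $\cT_p^{*}=T_p-\sum_i p^{i/2}P_{\mH_p}\tilde F^{(i)}$ would contain growing powers $p^{j/2}$ unless the corresponding Taylor coefficients of $\tilde F^{(i)}$ vanish (Lemma \ref{t4.6}). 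Even that only yields $F^{(i)}(0,\ov{z}')=F^{(i)}(z,0)=0$ for $i>0$, which does not exclude $z_1\ov{z}_1'$; the full vanishing requires rerunning the entire argument on the derivative operators $p^{-1/2}P_{\mH_p}(\nabla^{E_p}_{X}T_p)P_{\mH_p}$, whose leading symbol is $\partial_{z_j}F$, and iterating (Lemma \ref{t4.8}). None of this analytic input---the $L^{2}$ operator-norm control of $\cT_p$ and $\cT_p^{*}$ and the derivative bootstrap---appears in your proposal, and the algebraic shortcut put in its place does not close the argument.
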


\begin{proof}
The proof is divided in the series of Lemmas \ref{t4.4}--\ref{t4.8}.
Our first observation is as follows.

\begin{lemma}\label{t4.4}
$\cQ_{0, x_{0}}\in \End(E_{x_{0}})[Z, Z']$ and $\cQ_{0, x_{0}}$ is a
polynomial in $z, \ov{z}'$.
\end{lemma}
\begin{proof}
By (\ref{4.3}), for $k_{0}=2$ there exists $\theta_{3}\in (0, 1/2)$
such that for any $b>0$ and every $Z, Z'\in T_{x_{0}}X$
with $|Z|, |Z'|<b p^{-\frac{1}{2}+\theta_{3}}$, we have
\begin{align}\label{4.11}
p^{-n}T_{p, x_{0}}(Z, Z')
\cong
\sum^{2}_{r=0}(\cQ_{r, x_{0}}\cP_{x_{0}})
(\sqrt{p}Z, \sqrt{p}Z')p^{-r/2}+\cO_{m}(p^{-1}).
\end{align}
By (\ref{0.28}),
\begin{align}\label{4.12}
p^{-n}P_{\mH_{p}, x_{0}}(Z, Z')
\cong
\sum^{2}_{r=0}(J_{r, x_{0}}\cP_{x_{0}})
(\sqrt{p}Z, \sqrt{p}Z')p^{-r/2}+\cO_{m}(p^{-1}),
\end{align}
in the sense of Notation A with $\theta_{4}=1/4(n+m+4)$.
Combining (\ref{4.11}) and (\ref{4.12}),
modelled the way we get (\ref{3.46}) from (\ref{0.18a}) and (\ref{0.39}),
we obtain that there exists $k\geqslant 2$ large enough and
\begin{align}
\theta_{m}=\frac{1}{4(n+k+m+2)}\in (0, 1/2)
\end{align}
such that for every $Z, Z'\in T_{x_{0}}X$
with $|Z|, |Z'|<b p^{-\frac{1}{2}+\theta_{m}}$,
\begin{align}\begin{split}\label{4.13}
& p^{-n}(P_{\mH_{p}}T_{p}P_{\mH_{p}})_{x_{0}}(Z, Z')
\\ \cong &
\sum^{2}_{r=0}\sum_{r_{1}+r_{2}+r_{3}=r}
\Big[(J_{r_{1}, x_{0}}\cP_{x_{0}})\circ (\cQ_{r_{2}, x_{0}}\cP_{x_{0}})
\circ (J_{r_{3}, x_{0}}\cP_{x_{0}})\Big]
(\sqrt{p}Z, \sqrt{p}Z')p^{-r/2}
 \\ & +\cO_{m}(p^{-1}).
\end{split} \end{align}
Since $P_{\mH_{p}}T_{p}P_{\mH_{p}}=T_{p}$,
we deduce from (\ref{4.11}) and (\ref{4.13}) that
\begin{align}\label{4.15}
\cQ_{0, x_{0}}\cP_{x_{0}}=\cP_{x_{0}}
\circ (\cQ_{0, x_{0}}\cP_{x_{0}})\circ \cP_{x_{0}}.
\end{align}
By \cite[(2.8)]{MM08} and (\ref{4.15}), we obtain
\begin{align}
\cQ_{0, x_{0}}\in \End(E_{x_{0}})[z, \ov{z}'].
\end{align}
The proof of Lemma \ref{t4.4} is complete.
\end{proof}

For simplicity we denote in the rest of the proof 
$F_{x}=\cQ_{0, x}\in \End(E_{x})$.
Let $F_{x}=\sum_{i\geqslant 0} F^{(i)}_{x}$ be the decomposition 
of $F_{x}$ in homogeneous polynomials
$F^{(i)}_{x}$ of degree $i$. We will show $F^{(i)}_{x}$
vanish identically for $i>0$,
that is
\begin{align}\label{4.17}
F^{(i)}_{x}(z, \ov{z}')=0 \ \ \textup{for all}\ i>0\
\textup{and}\ z, \ov{z}'\in \C.
\end{align}
The first step is to prove
\begin{align}\label{4.18}
F^{(i)}_{x}(0, \ov{z}')=0 \ \ \textup{for all}\ i>0\ 
\textup{and}\ z'\in \C.
\end{align}
Since $T_{p}$ is self-adjoint, then we have
\begin{align}\label{4.19}
F^{(i)}_{x}(z, \ov{z}')=\big(F^{(i)}_{x}(z', \ov{z})\big)^{\ast}.
\end{align}
Consider $0<\theta_{k_{0}, m}<1$ as in
hypothesis (iii)(c) of Theorem \ref{t4.1}.
For $Z'\in \R^{2n}\simeq T_{x}X$
with $|Z'|<\var p^{-\frac{1}{2}+\theta_{k_{0}, m}}$
and $y=\exp^{X}_{x}(Z')$. Set
\begin{align}\label{4.20}
F^{(i)}(x, y)=F^{(i)}_{x}(0, \ov{z}')\in \End(E_{x}),
\nonumber \\
\tilde{F}^{(i)}(x, y)=\big(F^{(i)}(y, x)\big)^{\ast}\in\End(E_{y}).
\end{align}
$F^{i}$ and $\tilde{F}^{(i)}$ define smooth sections on a neighborhood 
of the diagonal of $X\times X$. Clearly, the $\tilde{F}(x, y)$'s need not be
polynomials in $z$ and $\ov{z}'$.

Since we wish to define global operators induced by these kernels, we use
a cut-off function in the neighborhood of the diagonal. Pick a
smooth function $\eta\in \cC^{\infty}(\R)$ such that
\begin{align}
\eta(u)=1 \ \textup{for}\ |u|\leqslant \var/2 \ \ \ \textup{and}\ \
\eta(u)=0 \ \textup{for}\ |u|\geqslant \var.
\end{align}

We denote by $F^{(i)}P_{\mH_{p}}$ and $P_{\mH_{p}}\tilde{F}^{(i)}$
the operators defined by the kernels
\begin{align}\label{4.22}
\eta(d(x, y))F^{(i)}(x, y)P_{\mH_{p}}(x, y)\ \ \textup{and}\ \
\eta(d(x, y))P_{\mH_{p}}(x, y)\tilde{F}^{(i)}(x, y)
\end{align}
with respect to $dv_{X}(y)$. Set
\begin{align}\label{4.23}
\cT_{p}=T_{p}-\sum_{i\leqslant \textup{deg}
F_{x}}(F^{(i)}P_{\mH_{p}})p^{i/2}.
\end{align}
The operators $\cT_{p}$ extends naturally to bounded operators 
on $L^{2}(X, L^{p}\otimes E)$.

From (\ref{4.3}) and (\ref{4.23}) we deduce that for any 
$k_{0}, m\in \N$,
there exist $\theta_{k_{0}, m}\in (0, 1/2)$
such that for any $|Z'|<\var p^{-\frac{1}{2}+\theta_{k_{0}, m}}$, 
we have the following expansion in the normal coordinates
around $x_{0}\in X$ (which has to be understood in the sense 
of (\ref{0.26})):
\begin{align}\label{4.25}
p^{-n}\cT_{p, x_{0}}(0, Z')\cong
\sum_{r=1}^{k_{0}}(R_{r, x_{0}}\cP_{x_{0}})(0, \sqrt{p}Z')p^{-r/2}
+\cO_{m}(p^{-k_{0}/2}),
\end{align}
for some polynomials $R_{r, x_{0}}$ of the same parity as $r$. 
For simplicity let us define similarly
to (\ref{4.20}) the kernel
\begin{align}\label{4.26}
R_{r, p}(x, y)=p^{n}(R_{r, x}\cP_{x})(0, \sqrt{p}Z')
\kappa^{-1/2}_{x}(Z')\eta(d(x, y)),
\end{align}
where $y=\exp_{x}^{X}(Z')$, and denote by $R_{r, p}$
the operator defined by this kernel.

\begin{lemma}\label{t4.5}
There exists $C>0$ such that for every $p\geqslant 1$ and
$s\in L^{2}(X, L^{p}\otimes E)$, we have
\begin{align}\label{4.27}
\big\|\cT_{p}s\big\|_{L^{2}}\leqslant Cp^{-1/2}\big\|s\big\|_{L^{2}},
\nonumber \\
\big\|\cT^{\ast}_{p}s\big\|_{L^{2}}\leqslant Cp^{-1/2}
\big\|s\big\|_{L^{2}}.
\end{align}
\end{lemma}

\begin{proof}
In order to use (\ref{4.25}) we write
\begin{align}
\big\|\cT_{p}s\big\|_{L^{2}}
\leqslant
\Big\|\big(\cT_{p}-\sum^{k_{0}}_{r=1}p^{-r/2}
R_{r, p}\big)s\Big\|_{L^{2}}
+\Big\|\sum^{k_{0}}_{r=1}p^{-r/2}R_{r, p}s\Big\|_{L^{2}}.
\end{align}
By the Cauchy-Schwarz inequality we have
\begin{align}\begin{split}\label{4.30}
&\Big\|\big(\cT_{p}-\sum^{k_{0}}_{r=1}p^{-r/2}R_{r, p}\big)s
\Big\|_{L^{2}}^{2}
\\ &
\leqslant \int_{X} \Big(\int_{X}\Big|\big(\cT_{p}
-\sum^{k_{0}}_{r=1}p^{-r/2}R_{r, p}\big)(x, y)\Big|dv_{X}(y)\Big)
 \\ & \ \ \ \ \ \ \ \ \ \
\times
\Big(\int_{X}\Big|\big(\cT_{p}-\sum^{k_{0}}_{r=1}
p^{-r/2}R_{r, p}\big)(x, y)\Big|\big|s(y)\big|^{2}dv_{X}(y)\Big)
dv_{X}(x).
\end{split}
\end{align}
By (\ref{0.7}), (\ref{4.1}), (\ref{4.22}), (\ref{4.23}) and (\ref{4.26}),  
we obtain uniformly in $x\in X$,
\begin{align}\begin{split}\label{4.31}
& \int_{X}\Big|\big(\cT_{p}-\sum^{k_{0}}_{r=1}
p^{-r/2}R_{r, p}\big)(x, y)\Big|\big|s(y)\big|^{2}dv_{X}(y)
 \\
\leqslant &
 \int_{B^{X}(x, \frac{\var}{2} p^{-\frac{1}{2}+\theta_{k_{0}, m}})}\Big|
 \big(\cT_{p}-\sum^{k_{0}}_{r=1}p^{-r/2}R_{r, p}\big)(x, y)
 \Big|\big|s(y)\big|^{2}dv_{X}(y)
 \\ &
\ \ \ \ \ \ \ \ \ \ \ \ \ \ \ \ \  +
 O(p^{-\infty})
 \int_{X\backslash B^{X}(x, \frac{\var}{2} p^{-\frac{1}{2}
 +\theta_{k_{0}, m}})}
 |s(y)|^{2}dv_{X}(y),
\end{split}\end{align}
where $\theta_{k, m}$ is given by (\ref{4.25}).
By (\ref{0.25}) and (\ref{4.25}) we obtain
\begin{align}\begin{split}\label{4.32}
 & \int_{B^{X}(x, \frac{\var}{2} p^{-\frac{1}{2}+\theta_{k_{0}, m}})}
 \Big|\big(\cT_{p}-\sum^{k_{0}}_{r=1}p^{-r/2}R_{r, p}\big)(x, y)
 \Big|\big|s(y)\big|^{2}dv_{X}(y)
  \\
 &= O(p^{-1}) \int_{B^{X}(x, \frac{\var}{2}
 p^{-\frac{1}{2}+\theta_{k_{0}, m}})}\big|s(y)\big|^{2}dv_{X}(y).
\end{split}\end{align}
In the same vein (by splitting the integral region as above) we obtain
\begin{align}\label{4.33}
\int_{X}\Big|\big(\cT_{p}-\sum^{k_{0}}_{r=1}p^{-r/2}
R_{r, p}\big)(x, y)\Big|dv_{X}(y)
=O(p^{-1})+O(p^{-\infty}).
\end{align}
Combining (\ref{4.30})--(\ref{4.33}) yields
\begin{align}\label{83}
\Big\|\Big(\cT_{p}-
\sum^{k_{0}}_{r=1}p^{-r/2}R_{r, p}\Big)s\Big\|_{L^{2}}
\leqslant C p^{-1}\big\|s\big\|_{L^{2}}.
\end{align}
A similar proof as for (\ref{83}) delivers
for $s\in L^{2}(X, L^{p}\otimes E)$,
\begin{align}\label{84}
\big\|R_{r, p}s\big\|_{L^{2}}\leqslant C\big\|s\big\|_{L^{2}},
\end{align}
which implies
\begin{align}\label{85}
\big\|\sum^{k_{0}}_{r=1}p^{-r/2}R_{r, p}s\big\|_{L^{2}}
\leqslant C p^{-1/2}\big\|s\big\|_{L^{2}}\ \
\textup{for}\ s\in L^{2}(X, L^{p}\otimes E),
\end{align}
for some constant $C>0$. Relations (\ref{83}) and (\ref{85}) entail 
the fist inequality of (\ref{4.27}),
which is equivalent to the second of (\ref{4.27}), by taking the adjoint.
This completes the proof of Lemma \ref{t4.5}.
\end{proof}

Let us consider the Taylor development of $\tilde{F}^{(i)}$ 
in normal coordinates around $x$
with $y=\exp^{X}_{x}(Z')$:
\begin{align}\label{4.37}
\tilde{F}^{(i)}(x, y)=\sum_{|\alpha|\leqslant k}
\frac{\partial^{\alpha}\tilde{F}^{(i)}}{\partial Z'^{\alpha}}(x, 0)
\frac{(\sqrt{p}Z')^{\alpha}}{\alpha!}p^{-|\alpha|/2}+
O(|Z'|^{k+1}).
\end{align}
The next step of the proof of Proposition \ref{t4.3} is the following.

\begin{lemma} \label{t4.6}
For every $j>0$, we have
\begin{align}\label{4.38}
\frac{\partial^{\alpha}\tilde{F}^{(i)}}{\partial Z'^{\alpha}}(x, 0)=0\ \
\textup{for}\ i-|\alpha|\geqslant j>0.
\end{align}
\end{lemma}

\begin{proof}
The definition (\ref{4.23}) of $\cT_{p}$ shows that
\begin{align}\label{89}
\cT^{\ast}_{p}=T_{p}-\sum_{i\leqslant \textup{deg}F_{x}}p^{i/2}
(P_{\mH_{p}}\tilde{F}^{(i)}).
\end{align}
Let us develop the sum on the right-hand side.
Consider the Taylor development (\ref{4.37})
with the expansion (\ref{0.28}) of the Bergman kernel we obtain
\begin{align}
& p^{-n}\big(P_{\mH_{p}}\tilde{F}^{(i)}\big)_{x_{0}}(0, Z')
\kappa^{1/2}(Z')
-\sum_{r, |\alpha|\leqslant k}\big(J_{r, x_{0}}\cP_{x_{0}}\big)
(0, \sqrt{p}Z')
\frac{\partial^{\alpha}\tilde{F}^{(i)}}{\partial Z'^{\alpha}}(x_{0}, 0)
\frac{(\sqrt{p}Z')^{\alpha}}{\alpha!}p^{-\frac{|\alpha|+r}{2}}
\\  &=
\Big[p^{-n}P_{\mH_{p}, x_{0}}(0, Z')\kappa^{1/2}(Z')
-\sum_{r\leqslant k}\big(J_{r, x_{0}}\cP_{x_{0}}\big)(0, \sqrt{p}Z')
p^{-r/2}\Big]
\tilde{F}_{x_{0}}^{(i)}(0, Z')
\nonumber
\\  & +
\sum_{r\leqslant k}\big(J_{r, x_{0}}\cP_{x_{0}}\big)(0, \sqrt{p}Z')
p^{-r/2} \Big[\tilde{F}_{x_{0}}^{(i)}(0, Z')
-
\sum_{|\alpha|\leqslant k}
\frac{\partial^{\alpha}\tilde{F}^{(i)}}{\partial Z'^{\alpha}}(x_{0}, 0)
\frac{(\sqrt{p}Z')^{\alpha}}{\alpha!}\Big].
\nonumber
\end{align}
By (\ref{0.28}), (\ref{42a}), (\ref{4.37}) and $\deg J_{r, x_{0}}
\leqslant 3r$,
we obtain for $k\geqslant \textup{deg}F_{x}+1$ and $m\in \N$, 
there exist
$\theta_{k, m}\in (0, 1)$ such that for any $Z'\in T_{x_{0}}X$
with $|Z'|\leqslant b p^{-\frac{1}{2}+\theta_{k, m}}$,
we have
\begin{align}\label{90}
& p^{-n}\sum_{i}\big(P_{\mH_{p}}\tilde{F}^{(i)}\big)_{x_{0}}(0, Z')
p^{i/2}
\\ &\cong 
\sum_{i}\sum_{|\alpha|, r\leqslant k}\big(J_{r, x_{0}}\cP_{x_{0}}\big)
(0, \sqrt{p}Z')
\frac{\partial^{\alpha}\tilde{F}^{(i)}}{\partial Z'^{\alpha}}(x_{0}, 0)
\frac{(\sqrt{p}Z')^{\alpha}}{\alpha!}p^{(i-|\alpha|-r)/2}
+\cO_{m}(p^{(\deg F-k)/2}).
\nonumber
\end{align}
Having in mind the second inequality of (\ref{4.27}),
this is only possible if for every $j>0$ the coefficients of $p^{j/2}$ on the
right-hand side of (\ref{90}) vanish. Thus, we have for every $j>0$:
\begin{align}\label{91}
\sum^{\deg F_{x}}_{l=j}
\sum_{|\alpha|+r=l-j}J_{r, x_{0}}(0, \sqrt{p}Z')
\frac{\partial^{\alpha}\tilde{F}^{(l)}}{\partial Z'^{\alpha}}(x_{0}, 0)
\frac{(\sqrt{p}Z')^{\alpha}}{\alpha!}=0.
\end{align}
From (\ref{91}), we will prove by recurrence that for any $j>0$,
(\ref{4.38}) holds.
As the first step of the recurrence let us take $j=\deg F_{x}$ in
(\ref{91}). Since $J_{0, x_{0}}=\Id_{E_{x_{0}}}$, then we get
immediately $\tilde{F}^{(\deg F_{x})}(x_{0}, 0)=0$.
Hence \eqref{4.38} holds for $j= \deg F_{x}$. 
Assume that $(\ref{4.38})$ holds for
$j>j_{0}>0$. Then for $j=j_{0}$, the coefficient with $r>0$ in (\ref{91})
is zero.
Since $J_{0, x_{0}}=\Id_{E_{x_{0}}}$, then (\ref{91}) reads
\begin{align}
\sum_{\alpha}\frac{\partial^{\alpha}
\tilde{F}^{(j_{0}+|\alpha|)}}{\partial Z'^{\alpha}}(x_{0}, 0)
\frac{(\sqrt{p}Z')^{\alpha}}{\alpha!}=0,
\end{align}
which entails (\ref{4.38}) for $j=j_{0}$. The proof of (\ref{4.38})
is complete.
\end{proof}

\begin{lemma}\label{t4.7}
For $i>0$, we have
\begin{align}\label{4.43}
\frac{\partial^{\alpha}F^{(i)}_{x}}{\partial \ov{z}'^{\alpha}}(0, 0)=0, 
\ |\alpha|\leqslant i.
\end{align}
Therefore, $F^{(i)}_{x}(0, \ov{z}')=0$ for all $i>0$ and $z'\in \C$, i.e.,
\textup{(\ref{4.18})} holds true. Moreover,
\begin{align}\label{4.44}
F^{(i)}_{x}(z, 0)=0 \ \textup{for all}\ i>0\ \textup{and all}\ z\in \C.
\end{align}
\end{lemma}

\begin{proof}
Let us start with some preliminary observations.
In view of (\ref{4.27}), (\ref{4.38}) and (\ref{90}), a comparison the
coefficient of $p^{0}$ in (\ref{4.11}) and (\ref{89}) yields
\begin{align}\label{95}
\tilde{F}^{(i)}(x, Z')=F^{(i)}_{x}(0, \ov{z}')+O(|Z'|^{i+1}).
\end{align}
Using the definition (\ref{4.20}) of $\tilde{F}^{(i)}(x, Z')$ and taking
the adjoint of (\ref{95}), we get
\begin{align}
F^{(i)}(Z', x)=\big(F^{(i)}_{x}(0, \ov{z}')\big)^{\ast}+O(|Z'|^{i+1}),
\end{align}
which implies
\begin{align}\label{4.47}
\frac{\partial^{\alpha}}{\partial z^{\alpha}}F^{(i)}(\cdot, x)\Big|_{x}
=\Big(\frac{\partial^{\alpha}}{\partial \ov{z}'^{\alpha}}
F^{(i)}_{x}(0, \ov{z}')\Big)^{\ast}\ \textup{for}\ |\alpha|\leqslant i.
\end{align}
In order to prove the Lemma it suffice to show that
\begin{align}\label{98}
\frac{\partial^{\alpha}}{\partial z^{\alpha}}F^{(i)}(\cdot, z)\Big|_{x}=0\
\textup{for}\ |\alpha|\leqslant i.
\end{align}
We prove this by induction over $|\alpha|$. For $|\alpha|=0$, it is obvious
that $F^{(i)}(0, x)=0$, since $F^{(i)}(0, x)$ is a homogeneous polynomial
of degree $i>0$. For the induction step,
let $j_{X}: X\rightarrow X\times X$ be the diagonal
injection. By Lemma \ref{t4.4} and the definition (\ref{4.20}) 
of $F^{(i)}(x, y)$,
\begin{align}\label{4.48}
\frac{\partial}{\partial z'_{j}}F^{i}(x, y)=0\ \textup{near}\ j_{X}(X),
\end{align}
where $y=\exp^{X}_{x}(Z')$. Assume now that 
$\alpha\in \N^{n}$ and
(\ref{98}) holds for $|\alpha|-1$. Consider $j$ with $\alpha_{j}>0$ and set
\begin{align}
\alpha'=(\alpha_{1}, \ldots, \alpha_{j}-1, \ldots, \alpha_{n}).
\end{align}
Taking the derivative of (\ref{4.20}) and using the induction hypothesis 
and (\ref{4.48}), we have
\begin{align}
\frac{\partial^{\alpha}}{\partial z^{\alpha}}F^{(i)}(\cdot, x)\Big|_{x}
=\frac{\partial}{\partial z_{j}}j^{\ast}_{X}
\big(\frac{\partial^{\alpha'}}{\partial z^{\alpha'}}F^{(i)}\big)\Big|_{x}
-\frac{\partial^{\alpha'}}{\partial z^{\alpha'}}
\frac{\partial}{\partial z'_{j}}F^{(i)}(\cdot, \cdot)\Big|_{0, 0}=0.
\end{align}
Thus, (\ref{4.43}) is proved. The identity (\ref{4.18}) follows too,
since it is equivalent to (\ref{4.43}).
Furthermore, (\ref{4.44}) follows from (\ref{4.18}) and (\ref{4.19}).
This finishes the proof of Lemma \ref{t4.7}.
\end{proof}

\begin{lemma}\label{t4.8}
We have $F^{(i)}_{x}(z, \ov{z}')=0$ for all $i>0$ and $z, z'\in \C^{n}$.
\end{lemma}

\begin{proof}
Let us consider the operator
\begin{align}\label{4.52}
\frac{1}{\sqrt{p}}P_{\mH_{p}}\big(\nabla_{X, x}^{E_{p}}T_{p}\big)
P_{\mH_{p}}
\ \ \textup{with}\ X\in \cC^{\infty}(X, TX),
X(x_{0})=\frac{\partial}{\partial z_{j}}+\frac{\partial}{\partial \ov{z}_{j}}.
\end{align}
By Remark \ref{t5},
the leading term of its asymptotic expansion as in (\ref{0.26}) is
\begin{align}
\Big(\frac{\partial}{\partial z_{j}}F_{x_{0}}\Big)(\sqrt{p}z, \sqrt{p}\ov{z}')
\cP_{x_{0}}(\sqrt{p}Z, \sqrt{p}Z').
\end{align}
By (\ref{4.18}) and (\ref{4.44}), 
$(\frac{\partial}{\partial z_{j}}F_{x_{0}})(z, \ov{z}')$ is an odd
polynomial in $z, \ov{z}'$ whose constant term vanishes. We
reiterate the argument from (\ref{4.23})--(\ref{4.47}) 
by replacing the operator $T_{p}$ with the operator
(\ref{4.52}); we get for $i>0$,
\begin{align}\label{4.54}
\frac{\partial}{\partial z_{j}}F^{(i)}_{x}(0, \ov{z}')=0.
\end{align}
By (\ref{4.19}) and (\ref{4.54}),
\begin{align}\label{4.55}
\frac{\partial}{\partial \ov{z}'_{j}}F^{(i)}_{x}(z, 0)=0.
\end{align}
By continuing this process, we show that for all $i>0$, 
$\alpha\in \Z^{n}$, $z, z'\in \C^{n}$,
\begin{align}\label{4.56}
\frac{\partial^{\alpha}}{\partial z^{\alpha}}F^{(i)}_{x}(0, \ov{z}')
=\frac{\partial^{\alpha}}{\partial \ov{z}'^{\alpha}}F^{(i)}_{x}(z, 0)=0.
\end{align}
Thus, Lemma \ref{t4.8} is proved and (\ref{4.17}) holds true.
\end{proof}

\noindent
Lemma \ref{t4.8} finishes the proof of Proposition \ref{t4.3}.
\end{proof}

We come now to the proof of the first induction step leading to (\ref{4.5}).

\begin{prop}\label{t4.9}
We have
\begin{align}\label{4.57}
p^{-n}(T_{p}-T_{g_{0}, p})_{x_{0}}(Z, Z')
\cong
\cO_{m}(p^{-1})
\end{align}
in the sense of Notation A. Consequently,
\begin{align}\label{4.58}
T_{p}=P_{\mH_{p}}g_{0}P_{\mH_{p}}+\mO(p^{-1}),
\end{align}
i.e., relation \textup{(\ref{4.8})} holds true in the
sense of \textup{(\ref{0.19})}.
\end{prop}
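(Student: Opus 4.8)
The plan is to identify the difference $\cT_p':=T_p-T_{g_0,p}$ and to show that its near-diagonal expansion starts at order $p^{-1}$, which is exactly \eqref{4.57}; the operator-norm statement \eqref{4.58} will then follow from this estimate together with the off-diagonal decay, by the same Schur / Cauchy--Schwarz argument used in the proof of Lemma \ref{t4.5}.

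First I would record that $g_0$ is self-adjoint: since $T_p$ is self-adjoint one has $T_p(x,x')^{\ast}=T_p(x',x)$, hence $\cQ_{0,x_0}(Z,Z')^{\ast}=\cQ_{0,x_0}(Z',Z)$ (using $\overline{\cP_{x_0}(Z,Z')}=\cP_{x_0}(Z',Z)$, see \eqref{p}), and evaluating at $Z=Z'=0$ gives $g_0(x_0)^{\ast}=g_0(x_0)$. Thus $T_{g_0,p}$, and therefore $\cT_p'$, is self-adjoint, and $P_{\mH_p}\cT_p'P_{\mH_p}=\cT_p'$. Next, by Lemma \ref{t3.4} applied to $f=g_0$, for which $Q_{0,x_0}(g_0)=g_0(x_0)\Id_{E_{x_0}}=\cQ_{0,x_0}(0,0)$ by \eqref{3.17}, by hypothesis (iii)(c) of Theorem \ref{t4.1}, and by Proposition \ref{t4.3} which gives $\cQ_{0,x_0}\equiv\cQ_{0,x_0}(0,0)$, the coefficients of $p^{0}$ in the near-diagonal expansions of $p^{-n}T_{p,x_0}$ and $p^{-n}T_{g_0,p,x_0}$ coincide, so that in the sense of Notation A
\begin{align*}
p^{-n}\cT_{p,x_0}'(Z,Z')\cong\sum_{r=1}^{k_0}(S_{r,x_0}\cP_{x_0})(\sqrt{p}Z,\sqrt{p}Z')p^{-r/2}+\cO_m(p^{-k_0/2}),
\end{align*}
where $S_{r,x_0}:=\cQ_{r,x_0}-Q_{r,x_0}(g_0)\in\End(E_{x_0})[Z,Z']$ has the same parity as $r$; in particular $S_{1,x_0}$ is odd. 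Hence \eqref{4.57} amounts precisely to showing $S_{1,x_0}\equiv 0$.

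To prove $S_{1,x_0}\equiv 0$ I would rerun the argument of Proposition \ref{t4.3}, i.e.\ Lemmas \ref{t4.4}--\ref{t4.8}, now applied to the self-adjoint sandwiched operator $\cT_p'$ in place of $T_p$, with $S_{1,x_0}$ playing the role of the leading coefficient $\cQ_{0,x_0}$ and all powers of $p$ shifted down by $\tfrac12$. Comparing the $p^{-1/2}$-terms on the two sides of the identity $\cT_p'=P_{\mH_p}\cT_p'P_{\mH_p}$ and using $S_{0,x_0}=0$ together with $J_{0,x_0}=\Id_{E_{x_0}}$ gives $S_{1,x_0}\cP_{x_0}=\cP_{x_0}\circ(S_{1,x_0}\cP_{x_0})\circ\cP_{x_0}$, whence $S_{1,x_0}\in\End(E_{x_0})[z,\overline{z}']$ by \cite[(2.8)]{MM08}, exactly as in Lemma \ref{t4.4}; being odd it has vanishing constant term. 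Writing $S_{1,x}=\sum_i S^{(i)}_{1,x}$ for its decomposition into homogeneous parts, the auxiliary operator $\cT_p'-\sum_i(S^{(i)}_{1}P_{\mH_p})p^{(i-1)/2}$ has a near-diagonal kernel expansion at $Z=0$ starting at order $p^{-1}$, hence is $\mO(p^{-1})$ in operator norm by the Cauchy--Schwarz estimate of Lemma \ref{t4.5}, and the same holds for its adjoint. Since $\cT_p'$ is self-adjoint, comparing the $p^{-1/2}$-coefficients in the two resulting representations of $\cT_p'$ and running the recurrence of Lemmas \ref{t4.6}--\ref{t4.7} forces all Taylor coefficients of $S^{(i)}_{1}$ for $i>0$ to vanish; applying the same reasoning to the analogue of \eqref{4.52} with $T_p$ replaced by $\cT_p'$ (as in Lemma \ref{t4.8}) then yields $S^{(i)}_{1,x}\equiv 0$ for all $i>0$, and together with the vanishing constant term, $S_{1,x}\equiv 0$.

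Finally, with $S_{1,x_0}\equiv 0$ the displayed expansion of $\cT_p'$ starts at $r=2$, which is \eqref{4.57}. For \eqref{4.58}, I would combine this near-diagonal estimate---whose leading term carries the Gaussian factor $\cP_{x_0}(\sqrt{p}Z,\sqrt{p}Z')$---with the far-off-diagonal bound for $\cT_p'$, which follows from hypothesis (ii) for $T_p$ and from Lemma \ref{t3.2} for $T_{g_0,p}$; the Schur test argument of the proof of Lemma \ref{t4.5} then shows that both $\sup_x\int_X|\cT_p'(x,y)|\,dv_X(y)$ and $\sup_y\int_X|\cT_p'(x,y)|\,dv_X(x)$ are $\mO(p^{-1})$, so $\|\cT_p'\|=\mO(p^{-1})$, i.e.\ $T_p=P_{\mH_p}g_0P_{\mH_p}+\mO(p^{-1})$. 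The hard part is the third paragraph: adapting the self-adjointness machinery of Lemmas \ref{t4.5}--\ref{t4.8} so that it controls the \emph{second} coefficient $S_1$ rather than the leading one, which forces one to carry the half-integer shifts in the powers of $p$ correctly through every step.
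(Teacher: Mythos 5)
Your proposal is correct and follows essentially the same route as the paper: the paper subtracts the two expansions to reduce \eqref{4.57} to the vanishing of $F_{1,x}:=\cQ_{1,x}-Q_{1,x}(g_{0})$ (your $S_{1,x}$), and then proves this in Lemma \ref{t4.10} by forming exactly your auxiliary operator $\cT_{p,1}=T_{p}-P_{\mH_{p}}g_{0}P_{\mH_{p}}-\sum_{i}(F_{1}^{(i)}P_{\mH_{p}})p^{(i-1)/2}$ and rerunning the machinery of Lemmas \ref{t4.4}--\ref{t4.8} with the half-integer shift in the powers of $p$. Your explicit Schur-test derivation of the operator-norm bound \eqref{4.58} from the kernel estimate is the same mechanism the paper invokes implicitly via the argument of Lemma \ref{t4.5}.
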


\begin{proof}
Let us compare the asymptotic expansion of $T_{p}$ and
$T_{g_{0}, p}=P_{\mH_{p}}g_{0}P_{\mH_{p}}$.
Using the Notation A, the expansion (\ref{0.29}) (for $k_{0}=2$) reads
for $\theta_{m}=1/8(n+k+m+2)$,
\begin{align}\label{4.59}
& p^{-n}T_{g_{0}, p, x_{0}}(Z, Z')
\\ \cong &
\big(g_{0}(x_{0})\cP_{x_{0}}+Q_{1, x_{0}}(g_{0})\cP_{x_{0}}p^{-1/2}
+Q_{2, x_{0}}(g_{0})\cP_{x_{0}}p^{-1}\big)
(\sqrt{p}Z, \sqrt{p}Z')+\cO_{m}(p^{-1}),
\nonumber
\end{align}
since $Q_{0, x_{0}}(g_{0})=g_{0}(x_{0})\Id_{E_{x_{0}}}$
by (\ref{3.17}).
The expansion (\ref{4.3}) (also for $k_{0}=2$) takes the form
for $\theta_{m}$ in (\ref{4.3}),
\begin{align}\label{4.60}
& p^{-n}T_{p, x_{0}}(Z, Z')
\\ \cong &
\big(g_{0}(x_{0})\cP_{x_{0}}+\cQ_{1, x_{0}}\cP_{x_{0}}p^{-1/2}
+\cQ_{2, x_{0}}\cP_{x_{0}}p^{-1}\big)
(\sqrt{p}Z, \sqrt{p}Z')+\cO_{m}(p^{-1}),
\nonumber
\end{align}
where we have used Proposition \ref{t4.3} and the
definition (\ref{4.6}) of $g_{0}$. Thus
subtracting (\ref{4.59}) from (\ref{4.60}) we obtain
for some $\theta_{m}\in (0, 1/2)$,
\begin{align}\label{4.61}
& p^{-n}(T_{p}-T_{g_{0}, p})_{x_{0}}(Z, Z')
\nonumber \\ \cong &
\big((\cQ_{1, x_{0}}-Q_{1, x_{0}}(g_{0}))
\cP_{x_{0}}\big)(\sqrt{p}Z, \sqrt{p}Z')p^{-1/2}
+\cO_{m}(p^{-1}).
\end{align}
Thus, it suffices to prove the following result.

\begin{lemma}\label{t4.10}
\begin{align}\label{4.63}
F_{1, x}:=\cQ_{1, x}-Q_{1, x}(g_{0})\equiv 0.
\end{align}
\end{lemma}

\begin{proof} We note first that $F_{1, x}$ is an odd polynomial in $z$
	and $\ov{z}'$;
we verify this statement as in Lemma \ref{t4.4}.
Thus the constant term of $F_{1, x}$ vanishes.
To show that the rest of the term vanish, we consider the decomposition
$F_{1, x}=\sum_{i\geqslant 0}F^{(i)}_{1, x}$ in homogeneous polynomials
$F^{(i)}_{1, x}$
of degree $i$. To prove (\ref{4.63}) it suffice to show that
\begin{align}\label{4.64}
F^{(i)}_{1, x}(z, \ov{z}')=0\ \ \textup{for all}\ i>0\ 
\textup{and}\ z, z'\in \C^{n}.
\end{align}
The proof of (\ref{4.64}) is similar to that of (\ref{4.17}).
Namely, we define as in (\ref{4.20}) the operator $F^{(i)}_{1}$,
by replacing $F^{(i)}_{x}(0, \ov{z}')$ by $F^{(i)}_{1, x}(0, \ov{z}')$, 
and we set (analogue to (\ref{4.23}))
\begin{align}
\cT_{p, 1}=T_{p}-P_{\mH_{p}}g_{0}P_{\mH_{p}}
-\sum_{i\leqslant \textup{deg}F_{i}}(F_{1}^{(i)}P_{\mH_{p}})p^{(i-1)/2}.
\end{align}
Due to (\ref{0.29}) and (\ref{4.3}), there exist polynomials 
$\tilde{R}_{r, x_{0}}\in \C[Z,Z']$ of the same
parity as $r$ such that the following expansion in the normal 
coordinates around $x_{0}\in X$
holds for any $k_{0}\geqslant 2$:
\begin{align}
p^{-n}\cT_{p, 1, x_{0}}(0, Z')
\cong
\sum^{k_{0}}_{r=2}(\tilde{R}_{r, x_{0}}\cP_{x_{0}})(0, \sqrt{p}Z')
p^{-r/2}
+\cO_{m}(p^{-k_{0}/2}),
\end{align}
in the sense of Condition A with $\theta_{k_{0}, m}$ the minimum of
$\theta_{k_{0}, m}$ in (\ref{0.29})
and $\theta_{k_{0}, m}$ in (\ref{4.3}).
This is an analogue of (\ref{4.25}). Now we can repeat with obvious
modifications the proof of (\ref{4.17})
and obtain the analogue of (\ref{4.17}) with $F_{x}$ 
replaced by $F_{1, x}$. This completes
the proof of Lemma \ref{t4.10}.
\end{proof}

Lemma \ref{t4.10} and the expansion (\ref{4.61}) imply 
immediately Proposition \ref{t4.9}.
\end{proof}

\begin{proof}[Proof of Theorem \textup{\ref{t4.1}}]
Proposition \ref{t4.9} shows that the asymptotic expansion (\ref{4.5}) 
of $T_{p}$ holds for $q=0$.
Moreover, if $T_{p}$ is self-adjoint, then from (\ref{4.59}), (\ref{4.60}), 
$g_{0}$ is also self-adjoint.
We show inductively that (\ref{4.5}) holds for every $q\in \N$.
To prove (\ref{4.5}) for $q=1$ let
us consider the operator $p(T_{p}-P_{\mH_{p}}g_{0}P_{\mH_{p}})$.
We have to show now that $p(T_{p}-P_{\mH_{p}}g_{0}P_{\mH_{p}})$
satisfies the hypotheses of Theorem \ref{t4.1}.
Due to Lemma \ref{t3.2} and Theorem \ref{t4.1} (ii),
the first two conditions are easily verified. To prove the third,
just subtract the
asymptotics of $T_{p, x_{0}}(Z, Z')$ (given by (\ref{4.3})) and 
$T_{g_{0}, p, x_{0}}(Z, Z')$
(given by (\ref{0.29})). Taking into account Proposition \ref{t4.3}
and (\ref{4.63}) the coefficients of $p^{0}$
and $p^{-1/2}$ in the difference vanish, which yields 
the desired conclusion.

Propositions \ref{t4.3} and \ref{t4.9} applied to 
$p(T_{p}-P_{\mH_{p}}g_{0}P_{\mH_{p}})$ yield
$g_{1}\in \cC^{\infty}(X, \End(E))$ such that (\ref{4.5}) holds
true for $q=1$.

We continue in this way the induction process to get (\ref{4.5}) for 
any $q$.
This completes the proof of Theorem \ref{t4.1}.
\end{proof}

\section{Algebra of Toeplitz operators}\label{s4}

The Poisson bracket $\{\cdot, \cdot\}$ on $(X, 2\pi \omega)$
is defined as
follows. For $f, g\in \cC^{\infty}(X)$, let $\xi_{f}$ be the Hamiltonian
vector field
generated by $f$, which is defined by $2\pi i_{\xi}\omega=df$. Then
\begin{align}\label{5.1}
\{f, g\}=\xi_{f}(dg).
\end{align}

\begin{proof}[Proof of Theorem \ref{t0}] First, it is obvious that
$P_{\mH_{p}}T_{f, p}T_{g, p}P_{\mH_{p}}=T_{f, p}T_{g, p}$.
To prove (\ref{4.1}), note that
from Lemma \ref{t3.2} and (\ref{0.29}), we know that for
any $k\in \N$ there exist $C_{k}>0$ and $M_{k}>0$ such that
for all $(x, x')\in X\times X$,
\begin{align}\label{5.1a}
\Big|T_{f, p}(x, x')\Big|_{\cC^{k}(X\times X)}\leqslant C_{k}p^{M_{k}}.
\end{align}
For any $b>0$ and $0<\theta<1$, if $d(x, x')>b p^{-\theta/2}$, then
\begin{align}\label{5.1b}
T_{f, p}T_{g, p}(x, x')=\Big(\int_{B^{X}(x, \frac{b}{2}p^{-\theta/2})}
+\int_{X\backslash B^{X}(x, \frac{b}{2}p^{-\theta/2})}\Big)
T_{f, p}(x, x'')T_{g, p}(x'', x')dv_{X}(x'').
\end{align}
Then (\ref{4.1}) follows from (\ref{0.23}),
(\ref{0.24b}), (\ref{5.1a}) and (\ref{5.1b}).
Like (\ref{3.23}),
for $|Z|, |Z'|<\frac{b}{2} p^{-\theta/2}$, we have
\begin{align}\label{5.2}
& (T_{f, p}T_{g, p})_{x_{0}}(Z, Z')
\\ =&
\int_{|Z''|<\var p^{-\theta_{1}/2}}T_{f, p, x_{0}}(z, z'')
T_{g, p, x_{0}}(Z'', Z')\kappa_{x_{0}}(Z'')dv_{TX}(Z'')
+\cO_{m}(p^{-\infty}).
\nonumber
\end{align}
By Lemma \ref{t3.2} and Lemma \ref{t3.4} and (\ref{5.2}),
we deduce as we obtain (\ref{3.46}) from
Proposition \ref{t1}, (\ref{0.18a}) and (\ref{3.23})
in the proof of Lemma \ref{t3.4}
that for $|Z|, |Z'|<\frac{b}{2} p^{-\theta/2}$, we have
\begin{align}\begin{split}\label{5.3}
 & p^{-n}(T_{f, p}T_{g, p})_{x_{0}}(Z, Z')
\cong 
\sum^{k_{0}}_{r=0}\big(Q_{r, x_{0}}(f, g)\cP_{x_{0}}\big)
(\sqrt{p}Z, \sqrt{p}Z')p^{-r/2}
+\cO_{m}(p^{-k_{0}/2}),
\end{split}\end{align}
with
\begin{align}\label{Q}
Q_{r, x_{0}}(f, g)=\sum_{r_{1}+r_{2}=r}\cK[Q_{r_{1}, x_{0}}(f), 
Q_{r_{2}, x_{0}}(g)].
\end{align}
Thus, $T_{f, p}T_{g, p}$ is a Toeplitz operator by Theorem \ref{t4.1}.
Moreover, it follows form the proofs of Lemma \ref{t3.4} 
and Theorem \ref{t4.1}
that $g_{l}=C_{l}(f, g)$, where $C_{l}$ are bidifferential operators.

The rest of the proof of Theorem \ref{t0} is exactly the same
as that of \cite[Theorem 1.1]{MM08} and we omit it here.
This finishes the proof of Theorem \ref{t0}.
\end{proof}

\begin{proof}[Proof of Theorem \ref{t2}]
Take a point $x_0\in X$ and $u_0\in E_{x_0}$ with $|u_0|_{h^{E}}=1$ 
such that $|f(x_0)(u_0)|={\norm f}_\infty$.
Recall that we trivialized the bundles $L$, $E$ 
in normal coordinates near $x_0$, 
and $e_L$ is the unit frame of $L$ which trivializes $L$. 
Moreover, in this normal coordinates, $u_0$ is a trivial section of $E$.
Considering the sequence of sections 
$S^p_{x_0}=p^{-n/2}P_{\mH_p}(e_L^{\otimes p}\otimes u_0)$, 
we have by \eqref{0.28},
\begin{equation}\label{toe4.18}
\big\|T_{f,\,p}\,S^p_{x_0}-f(x_0)S^p_{x_0}\big\|_{L^2}
\leqslant \frac{C}{\sqrt{p}}\norm{S^p_{x_0}}_{L^2}\,, 
\end{equation}
which immediately implies \eqref{toe4.17}.
\end{proof}

\section{Proof of Theorem \ref{t3}}\label{s6}
In this section, we show how to adapt the results of \cite{Ioos}
in order to prove Theorem \ref{t3}, which provides formula \eqref{c1fg} for
the coefficient $C_1(f,g)$ in the Toeplitz expansion \eqref{toe4.2} of the product
$T_{f,p}T_{g,p}$ of two Toeplitz operators (cf.\ Theorem \ref{t0}),
 when $g^{TX}(\cdot,\cdot)=\omega(\cdot, J\cdot)$. 

Let us introduce the main tools needed for this computation. 
It is shown in \cite[Theorem 1.4]{Ma08} that the restriction on 
$B^X(x,\epsilon)$ of the operator $\Delta_{p, \Phi}$ 
defined in \eqref{0.4} is equal, through the trivializations given
in Section \ref{s1} and after a convenient rescaling in $\sqrt{p}:=1/t$, 
to an operator $\cL_t$ on $B^{T_xX}(0,\epsilon)$ satisfying
\begin{equation}\label{expLt2}
\cL_t=\cL_0+\sum_{r=1}^mt^r \mO_r +\cO(t^{m+1}),
\end{equation}
for any $m\in\N$, where $\{\mO_r\}_{r\in\N}$ is a family of 
differential operators of order equal or less than $2$, 
with coefficients explicitly computable in terms of local data, 
and where the differential operator $\cO(t^{m+1})$ has
its coefficients and their derivatives up to order $k$ dominated 
by $C_k t^{m+1}$ for any $k\in\N$.

Moreover, the differential operator $\cL_0$ acts on the scalar 
part of smooth functions on $\R^{2n}$ with values in $E_x$, 
has a positive discrete spectrum on $L^2(\R^{2n})$, 
and the kernel of the orthogonal projection $\cP$ on $\Ker(\cL_0)$
is given by \eqref{p} with $a_j=2\pi$ therein.
 We write $\cP^\perp=\Id-\cP$, and  
then define an operator $(\cL_0)^{-1}\cP^\perp$ by inverting
the eigenvalues of $\cL_0$ on $\Ker(\cL_0)^\perp$.

As shown in \cite{Ma08}, there is a direct method to compute 
the family $\{\cF_{r,x}(Z,Z')\}_{r\in\N}$ defined in Theorem \ref{t22}, 
using \eqref{expLt2}. The following lemma, which has been established 
in \cite[Theorem 1.4, Theorem 1.16, (1.111)]{Ma08}, gives the first
three elements of this family. 

\begin{lemma}\label{J}
For all $r\in\N$, let $\cF_{r,x}$ be the operator associated 
with the kernel $\cF_{r,x}(Z,Z')$, and let the differential operators 
$\mO_1$ and $\mO_2$ be as in \eqref{expLt2}. 
Then the following formulas hold:
\begin{equation}
\label{J0}
\cF_{0,x}=\cP,
\end{equation}

\begin{equation}
\label{J1}
\cF_{1,x}=-(\cL_0)^{-1}\cP^{\perp}\mO_1\cP-\cP\mO_1 (\cL_0)^{-1}
\cP^{\perp},
\end{equation}

\begin{equation}
\label{J2}
\begin{split}
\cF_{2,x}&=(\cL_0)^{-1}\cP^{\perp}\mO_1 (\cL_0)^{-1}\cP^{\perp}
\mO_1 \cP-(\cL_0)^{-1}\cP^{\perp}\mO_2 \cP\\
& +\cP\mO_1 (\cL_0)^{-1}\cP^{\perp}\mO_1 (\cL_0)^{-1}\cP^{\perp}
-\cP\mO_2 (\cL_0)^{-1}\cP^{\perp}\\
+ & (\cL_0 )^{-1}\cP^{\perp}\mO_1 \cP\mO_1 (\cL_0)^{-1}\cP^{\perp}
-\cP\mO_1(\cL_0)^{-2}\cP^{\perp}\mO_1 \cP.
\end{split}
\end{equation}

Moreover, $\mO_1$ commutes with any $A\in\End(E_x)$, and we have 
the formula
\begin{equation}
\label{POP=0}
\cP\mO_1 \cP=0.
\end{equation}

In particular, $\cF_{0,x}$ and $\cF_{1,x}$ commute with
any $A\in\End(E_x)$.

\end{lemma}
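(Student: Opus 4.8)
The plan is to derive \eqref{J0}--\eqref{J2} from the expansion \eqref{expLt2} by a standard resolvent computation, and then to extract \eqref{POP=0} and the commutation assertions from the explicit form of $\mO_1$. The starting point, recalled from \cite[Theorem 1.4, Theorem 1.16]{Ma08}, is that after the rescaling $t=1/\sqrt p$ the family $\{\cF_{r,x}\}$ of Theorem \ref{t22} appears as the coefficients in the $t$-expansion of the spectral projection
\[
\frac{1}{2\pi i}\oint_{\delta}(\lambda-\cL_t)^{-1}\,d\lambda ,
\]
where $\delta$ is a fixed small circle around $0$ that encloses the cluster of $O(t^2)$ eigenvalues of $\cL_t$ and separates it from the remainder of the spectrum, which is positive and bounded away from $0$ by the spectral properties of $\cL_0$. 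Writing $R_0(\lambda)=(\lambda-\cL_0)^{-1}=\tfrac1\lambda\cP+\wi R(\lambda)$, with $\wi R(\lambda)=\cP^{\perp}(\lambda-\cL_0)^{-1}\cP^{\perp}$ holomorphic near $\lambda=0$, $\wi R(0)=-(\cL_0)^{-1}\cP^{\perp}$ and $\wi R'(0)=-(\cL_0)^{-2}\cP^{\perp}$, I would substitute \eqref{expLt2} into the Neumann series
\[
(\lambda-\cL_t)^{-1}=\sum_{k\geqslant0}R_0(\lambda)\Big[\Big(\sum_{r\geqslant1}t^r\mO_r\Big)R_0(\lambda)\Big]^{k},
\]
collect the coefficients of $t^0$, $t^1$, $t^2$ (namely $R_0$, $R_0\mO_1 R_0$, and $R_0\mO_2 R_0+R_0\mO_1 R_0\mO_1 R_0$), and integrate over $\delta$ using $\frac{1}{2\pi i}\oint_{\delta}\lambda^{-1}h(\lambda)\,d\lambda=h(0)$ and $\frac{1}{2\pi i}\oint_{\delta}\lambda^{-2}h(\lambda)\,d\lambda=h'(0)$ for $h$ holomorphic near $0$. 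The order $t^0$ gives \eqref{J0}, the order $t^1$ gives the two summands of \eqref{J1}, and the order $t^2$, after collecting the residues of all the terms obtained by substituting $R_0=\tfrac1\lambda\cP+\wi R$ into $R_0\mO_2 R_0+R_0\mO_1 R_0\mO_1 R_0$, gives the six summands of \eqref{J2}. I expect this last residue bookkeeping to be the most laborious step, though it is entirely mechanical; the one genuine subtlety, namely the legitimacy of the contour-integral expansion and its uniformity in $x$, is exactly what is supplied by \cite{Ma08}.

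Next I would prove \eqref{POP=0} together with the commutation of $\mO_1$ with any $A\in\End(E_x)$, both of which follow from the explicit description of $\mO_1$ in \cite[Theorem 1.4]{Ma08}. On the one hand, $\mO_1$ is a differential operator of order $\leqslant 2$ in $Z\in\R^{2n}$ whose coefficients are scalar-valued, since the endomorphism data of $E$ (namely $R^{E}$ and $\Phi$) enter \eqref{expLt2} only from the term $\mO_2$ on; hence $[\mO_1,A]=0$ for every constant $A\in\End(E_x)$. On the other hand, each monomial occurring in $\mO_1$ has odd total weight for the grading $\deg Z_i=1$, $\deg(\partial/\partial Z_i)=-1$, whereas $\cL_0$ preserves this grading and $\Ker\cL_0$ is a sum of weight-homogeneous subspaces; consequently $\cP\mO_1\cP$ changes the weight by an odd integer while mapping $\Ker\cL_0$ to itself, and therefore vanishes, which is \eqref{POP=0}. (In particular the summand $(\cL_0)^{-1}\cP^{\perp}\mO_1\cP\mO_1(\cL_0)^{-1}\cP^{\perp}$ appearing in \eqref{J2} is actually zero.)

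Finally, the commutation of $\cF_{0,x}$ and $\cF_{1,x}$ with $A\in\End(E_x)$ is then immediate: by \eqref{J0}, $\cF_{0,x}=\cP$ acts with the scalar kernel $\cP(Z,Z')\,\Id_{E_x}$ of \eqref{p} and so commutes with $A$; and by \eqref{J1}, $\cF_{1,x}$ is a composition of $\mO_1$ with $\cP$ and $(\cL_0)^{-1}\cP^{\perp}$, all three of which are scalar (the last being obtained by inverting the scalar operator $\cL_0$ on $\Ker(\cL_0)^{\perp}$) and hence commute with $A$. Assembling these observations completes the argument; beyond quoting from \cite[Theorem 1.4, Theorem 1.16, (1.111)]{Ma08} the explicit form of $\mO_1$ used in the previous paragraph, the only real work is the residue computation producing \eqref{J1}--\eqref{J2}.
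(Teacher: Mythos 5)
Your resolvent computation for \eqref{J0}--\eqref{J2} is correct and is in substance the method of the cited reference: the paper itself offers no proof of this lemma beyond pointing to \cite[Theorem 1.4, Theorem 1.16, (1.111)]{Ma08}, where precisely this contour-integral/residue bookkeeping is carried out. Your identification of which residues survive, and the observation that the terms $(\cL_0)^{-2}\cP^{\perp}\mO_1\cP\mO_1\cP$ and $\cP\mO_1\cP\mO_1(\cL_0)^{-2}\cP^{\perp}$ are absent from \eqref{J2} only because of \eqref{POP=0}, are both right. The scalarity of the coefficients of $\mO_1$ (hence $[\mO_1,A]=0$ and the commutation of $\cF_{0,x}$, $\cF_{1,x}$ with $A$) is also correctly argued.

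The gap is in your proof of \eqref{POP=0}. The weight argument fails: for the grading $\deg Z_i=1$, $\deg(\partial/\partial Z_i)=-1$, the model operator $\cL_0=\sum_j b_jb_j^+$ with $b_j=-2\partial_{z_j}+\tfrac{a_j}{2}\bar z_j$, $b_j^+=2\partial_{\bar z_j}+\tfrac{a_j}{2}z_j$ contains the summands $-4\partial_{z_j}\partial_{\bar z_j}$ and $\tfrac{a_j^2}{4}|z_j|^2$, of weights $-2$ and $+2$, so $\cL_0$ does \emph{not} preserve this grading; correspondingly $\Ker\cL_0$ is spanned by $z^\alpha e^{-\frac14\sum_j a_j|z_j|^2}$, which are not weight-homogeneous. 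What does survive is only the $\Z_2$-parity under $Z\mapsto -Z$, and parity alone cannot force $\cP\mO_1\cP=0$: the odd operator of multiplication by $\bar z_j$ satisfies $\cP\bar z_j\cP\neq0$. In fact \eqref{POP=0} is not a soft structural consequence of the oddness of $\mO_1$; it requires the explicit expression of $\mO_1$ from \cite[Theorem 1.4]{Ma08} (a combination of $\partial_jR^L$ contracted with $Z$ and $\nabla$), the commutation relations of the $b_j,b_j^+$ and the identities $b_j^+\cP=0=\cP b_j$, and ultimately the Bianchi identity $dR^L=0$; this is the content of \cite[(2.25)]{Ma08} (cf.\ the proof of \cite[Theorem 4.1.25]{Ma07} in the K\"ahler case). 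As written, your argument would "prove" the false statement $\cP\bar z_j\cP=0$, so this step must be replaced by the explicit computation or by the citation.
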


Lemma \ref{J} corresponds to \cite[Lemma 3.1]{Ioos}, and
the following technical Lemma corresponds to \cite[Lemma 3.3]{Ioos}.
It was essentially proved in \cite[(2.25)]{Ma08}.

\begin{lemma}
\label{L} The following formulas hold:
\begin{equation}
\label{L1}
\begin{split}
(\cP\mO_1 (\cL_0)^{-1}\cP^{\perp})(0,Z')=0,\\
(\cP\mO_1 (\cL_0)^{-1}\cP^{\perp})(Z,0)=0,\\
((\cL_0)^{-1}\cP^{\perp}\mO_1\cP)(Z,0)=0.
\end{split}
\end{equation}
\end{lemma}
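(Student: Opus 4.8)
The plan is to follow the explicit computation behind \cite[(2.25)]{Ma08} (cf.\ also \cite[Lemma 3.3]{Ioos}). I would first record the algebraic structure of the model. Differentiating \eqref{p} with $a_j=2\pi$ shows that $\cP(Z,Z')$, viewed as a function of $Z$, is annihilated by the operators $b_j:=2\,\partial/\partial\bar z_j+\pi z_j$, $1\le j\le n$; that $\cL_0=\sum_j b_j^{+}b_j$, where $b_j^{+}:=-2\,\partial/\partial z_j+\pi\bar z_j$ is the formal adjoint of $b_j$, so that $[\cL_0,b_j]=-4\pi b_j$ and $[\cL_0,b_j^{+}]=4\pi b_j^{+}$; that $\Ker\cL_0$ consists of the functions of the form (holomorphic)$\cdot\,e^{-\pi|z|^2/2}$; that $b_j\cP=0$ and $\cP b_j^{+}=0$ as operators; and that $(\cL_0)^{-1}\cP^{\perp}$ acts on the eigenspace of $\cL_0$ for the eigenvalue $4\pi k$ ($k\ge1$) by the scalar $1/(4\pi k)$. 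I would also recall from \cite[Theorem 1.4]{Ma08} the explicit form of $\mO_1$: a differential operator of order at most $2$, acting on the scalar part only (hence commuting with $\End(E_x)$, see Lemma \ref{J}), with odd polynomial coefficients in $Z$, and formally self-adjoint with respect to $dv_{TX}$. Besides this, the only input used is the vanishing \eqref{POP=0}, i.e.\ $\cP\mO_1\cP=0$.

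The third identity of \eqref{L1} reduces to the first. Since $\cL_0$ and $\mO_1$ are formally self-adjoint, the operator $(\cL_0)^{-1}\cP^{\perp}\mO_1\cP$ is the adjoint of $\cP\mO_1\cP^{\perp}(\cL_0)^{-1}=\cP\mO_1(\cL_0)^{-1}\cP^{\perp}$; since the Schwartz kernel of an adjoint satisfies $A^{*}(Z,Z')=A(Z',Z)^{*}$, this gives
\begin{equation*}
\big((\cL_0)^{-1}\cP^{\perp}\mO_1\cP\big)(Z,0)=\big(\cP\mO_1(\cL_0)^{-1}\cP^{\perp}\big)(0,Z)^{*},
\end{equation*}
which vanishes once the first identity is proved. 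It thus remains to establish the first two identities.

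For these I would compute the Schwartz kernel directly. By \eqref{POP=0}, $\cP^{\perp}\mO_1\cP=\mO_1\cP$, so the operator is $\cP\mO_1(\cL_0)^{-1}\cP^{\perp}$; writing $G:=(\cL_0)^{-1}\cP^{\perp}$, its kernel is $\big(\cP\mO_1 G\big)(Z,Z')=\big(\cP\,\big[\mO_1\,G(\cdot,Z')\big]\big)(Z)$, with $\mO_1$ acting on the first variable of $G(W,Z')$. Inserting the explicit $\mO_1$, moving its derivatives onto the factor $\cP(Z,\cdot)$ by an integration by parts, and using $b_j\cP=\cP b_j^{+}=0$, the commutators $[\cL_0,b_j^{\pm}]=\mp4\pi b_j^{\pm}$ (so that $(\cL_0)^{-1}$ may be pushed past the $b_j^{\pm}$), and the reproducing property of $\cP$, one rewrites $\big(\cP\mO_1 G\big)(Z,Z')$ in the form (a polynomial in $z,\bar z,z',\bar z'$)$\,\cdot\,\cP(Z,Z')$; it then suffices to check that this polynomial vanishes identically for $Z=0$, respectively for $Z'=0$. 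The parity of $\mO_1$ (hence of $\cF_1$) kills half of the monomials. For the remaining ones, $\cP\mO_1\cP=0$ forces every term of $\mO_1$, once normal-ordered in the raising and lowering operators $b_j^{+},b_j$ and the complementary Weyl generators, to carry at least one factor $b_j$ or $b_j^{+}$, so that after the substitution $Z=0$ or $Z'=0$ — while keeping track that $\mO_1$ does not commute with the restriction to $\{Z'=0\}$ — each surviving contribution vanishes. I expect this final bookkeeping with the explicit third-order coefficients of $\mO_1$, in particular with the mixed holomorphic/antiholomorphic second-order terms, to be the main obstacle; it is precisely the computation carried out in \cite[(2.25)]{Ma08}, and the decomposition of $\mO_1$ in terms of $b_j,b_j^{+}$ keeps it tractable.
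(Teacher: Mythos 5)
Your proposal follows essentially the same route as the paper: both reduce the lemma to the explicit harmonic-oscillator computation of $\mO_1$ and of the kernels $(\cL_0)^{-1}\cP^{\perp}\mO_1\cP$, $\cP\mO_1(\cL_0)^{-1}\cP^{\perp}$ carried out in \cite[(1.98), (1.99), (2.25)]{Ma08} (equivalently, to the observation that the formulas of \cite[Lemma 3.3]{Ioos} have vanishing scalar part). Your adjoint reduction of the third identity to the first is correct, and you rightly acknowledge that the vanishing at $Z=0$ and $Z'=0$ ultimately rests on the explicit coefficients of $\mO_1$ from (2.25) rather than following formally from $\cP\mO_1\cP=0$ and parity alone --- which is exactly the computation the paper itself cites.
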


The result of Lemma \ref{L} is a simple computation using 
\cite[(1,98),(1.99),(2.25)]{Ma08}. In fact, all the kernels associated to
the situation in this paper are 
the scalar part of the kernels of the corresponding situation in 
\cite{Ioos}. Lemma \ref{L} is then an expression of the fact that the 
corresponding formulas in \cite[Lemma 3.3]{Ioos} have no scalar parts.

Now note that by the proof of Theorem \ref{t4.1} at the end of
Section \ref{s4}, we still have the following formula:
\begin{equation}\label{Q2(f,g)-Q2(fg)}
C_1(f,g)(x)=Q_2(f,g)(0,0)-Q_2(fg)(0,0),
\end{equation}
where the coefficients $Q_2(fg)$ and $Q_2(f,g)$ has been defined in
Lemma 
\ref{t3.4} and \eqref{Q} respectively. Note that the formula 
\eqref{Q2(f,g)-Q2(fg)} is actually simpler than the one 
given in \cite{MM08}, 
as we only need to consider its scalar part.

Note that the reasoning of Section \ref{s4} to deduce \eqref{Q}
from the proof of Lemma \ref{t3.4} says that the composition of 
kernels is 
still computable in the same way than in \cite[\S\ 4.3]{MM08}, 
so that the kernel calculus as described in \cite[\S\ 2.3]{Ioos} 
still works in this context.

Using Lemma \ref{J}, Lemma \ref{L} and the Sections \ref{s3} 
and \ref{s4}, it is then straightforward to adapt the arguments 
of \cite[\S\ 2.2]{Ioos}. Thanks to Lemma \ref{L}, 
the proofs of \cite[Proposition 3.2, Proposition 3.3]{Ioos} are even easier. 
We thus get \cite[Theorem 1.1]{Ioos} in the context described
in the introduction, which is exactly Theorem \ref{t3}.

\end{document}